\numberwithin{equation}{section}
\newtheorem{thm}{Theorem}[section]
\def\B{\color{black}}
\bfseries\color{blue},
\begin{document}

\begin{frontmatter}

\title{Extreme learning machine collocation for the numerical solution of elliptic PDEs with sharp gradients}

\author[nap]{Francesco Calabr\`o} \ead{calabro@unina.it}
\author[nap2]{Gianluca Fabiani} \ead{gianluca.fabiani@unina.it}
\author[nap,nap2]{Constantinos Siettos} \ead{constantinos.siettos@unina.it}

\address[nap]{Dipartimento di Matematica e Applicazioni ``Renato Caccioppoli", Universit\`a degli Studi di Napoli Federico II }
\address[nap2]{Scuola Superiore Meridionale di Napoli, Universit\`a degli Studi di Napoli Federico II}

\begin{abstract}
We introduce a new numerical method based on machine learning to approximate the solution of elliptic partial differential equations with collocation using a set of sigmoidal functions. We show that a feedforward neural network with a single hidden layer with sigmoidal functions and fixed, random, internal weights and biases can be used to compute accurately a collocation solution. The choice to fix internal weights and bias leads to the so-called Extreme Learning Machine network. We discuss how to determine the range for both internal weights and biases in order to obtain a good underlining approximating space, and we explore the required number of collocation points. 
We demonstrate the efficiency of the proposed method with several one-dimensional diffusion-advection-reaction problems that exhibit steep behaviors, such as boundary layers.
The boundary conditions are imposed directly as collocation equations. We point out that there is no need of training the network, as the proposed numerical approach results to a linear problem that can be easily solved using least-squares. Numerical results show that the proposed method achieves a good accuracy. Finally, we compare the proposed method with finite differences and point out the significant improvements in terms of computational cost, thus avoiding the time-consuming training phase.

\end{abstract}

\begin{keyword}
Partial differential equations, Collocation methods, Artificial Neural Networks, Extreme Learning Machine, Boundary layer, Sigmoidal functions.
\end{keyword}

\end{frontmatter}

\section{Introduction}

Over the last few years various algorithms have been addressed for the numerical solution of Partial Differential Equations (PDEs)  with the aid of Artificial Neural Networks (ANN) (see e.g. \cite{berg2018unified,han2018solving,rudd2015constrained,sirignano2018dgm}). The use of ANN for the numerical solutions of systems of differential equations goes back to \cite{lagaris1998artificial,lagaris2000neural}, and the interest on new applications is in nowadays at the forefront of research interest \cite{anitescu2019artificial,arbabi2020linking,chan2019machine,DWIVEDI202096,guo2020analysis,HAN2020109672,lee2020coarse,lu2019deepxde,michoski2020solving,pang2019neural, SIRIGNANO2020109811}. The main focus of such techniques is on the numerical solution of some very difficult problems: for example high dimensional systems with difficult geometries and non-linear. In all cases, ANN seem to outperform conventional techniques being extremely flexible and computationally fast. The celebrated theorem of the universal approximation of ANNs gives some justification to such behavior, although some results are still unavailable: i.e. it is proved that the approximation error vanishes asymptotically, but there are few available results on explicit error bounds for the approximation error or about  polynomial reproduction (see e.g.  \cite{almira2020negative,costarelli2013approximation,guliyev2018approximation,pinkus1999approximation}). Among all types of ANNs, recently great attention is given to the so-called Extreme Learning Machines (ELM) \cite{HUANG201532,huang2004extreme}. However, the efficiency and applicability of ELM for the approximation of differential problems is still unexplored. Indeed, to the best of our knowledge, the only study on the subject is that of \cite{DWIVEDI202096} where the authors however report a failure of ELM  to deal with sharp gradient problems.\par
Here, we show how ELM can be exploited to deal with boundary problems, in particular 1D second order elliptic linear equations that exhibit steep behaviors, such as boundary layers. It is known that such problems if solved by classical methods such as finite differences may lead to several numerical instabilities such as spurious oscillations (see e.g. \cite{de2010interior,onate2016accurate}) even if the solution is regular. Thus, we focus on linear advection-reaction problems with constant coefficients where the solution is known analytically and it is regular, but the advection-dominated or the reaction-dominated cases may result to boundary layers. For such problems, several approaches (for example artificial diffusion, upwind schemes or mesh adaptivity \cite{MITCHELL2013350,quarteroniLibro}) have been addressed to deal with the emerged steep gradients. Compared to the above techniques, our approach is free from problem-dependent modifications being able to detect the steep gradients that arise.
The results reveal how ELM can serve as a robust numerical approach to obtain accurate solutions without the need of stabilization techniques. In particular, we propose an under-determined collocation method applied to an ELM with one hidden layer which they don't need to be trained. Our findings reveal the ability of ELM to approximate the solutions efficiently, thus extending and giving a new insight on the use of such types of ANN. What we propose here, by taking both internal weights and biases a-priori fixed is that we do not need to train the network in order to obtain a sufficiently accurate solution, at least if the number of neurons fits the scale of the layer. Obviously, such solutions can be taken as initial choices for a training process if the problem is non-linear and/or demands a higher approximation accuracy.
The paper is organized as follows. In Section 2, we introduce our approximation space on the base of ANN and ELM, and in Section 3 we present the proposed ELM collocation method. The approximation efficiency and convergence property of the scheme is analysed in Section 4. In Section 5, we present the numerical results obtained the proposed approach for the solution of several 1D boundary-layer elliptic problems and compare them with both the exact-analytical and FD numerical solution.

\section{The proposed ANN architecture: the ELM network}
In this section, we briefly introduce the ANN that we propose and the main results that give the properties of such universal approximators. We consider an ANN with a single hidden layer with $n$ neurons. We choose the activation function to be the standard logistic sigmoid function defined by
\begin{equation*}
\sigma(x)=\frac{1}{1+\text{exp}(-x)} \ .
\label{eq:sigmoide}
\end{equation*}
The output function is the linear activation function, so that the final approximation is achieved via a linear combination of sigmoid functions. In the network one has to fix the biases $\beta$, the connections between the input layer and the hidden layer ${\alpha}$, represented by a vector of internal weights, and the connections of the hidden layer with the output layer ${w}$, denoted by the external weights. Therefore, this type of ANN describes a map $G:\mathbb{R}\rightarrow \mathbb{R}$ that can be written as:
\begin{equation}
G(x; \bm{\alpha},\bm{\beta},\bm{w})=\sum_{i=1}^n w_i \sigma(\alpha_i x + \beta_i).
\label{eq:ANNfun}
\end{equation}
As it is well-known, this kind of ANN is a universal approximator for any $L^1$ function 
as stated, for example in \cite{cybenko1989approximation, hornik1990universal,hornik1989multilayer}.
\begin{thm}
For any function $f \in L^1([a,b])$ and for all $\varepsilon>0$ there exist a choice of $\bm{\alpha},\bm{\beta},\bm{w}$  such that
\begin{equation*}
\|G-f\|_{L^1} < \varepsilon.
\end{equation*}
where $\| \cdot \|_{L^1}$ is the usual $L^1$ norm, i.e. $||\psi||_{L^1}=\int_a^b |\psi(x)| dx $.
\end{thm}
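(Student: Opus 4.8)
The plan is to argue constructively in two stages: first reduce $f$ to a finite step function, then realize any step function approximately as a linear combination of sigmoids of the form appearing in \eqref{eq:ANNfun}. Fix $f \in L^1([a,b])$ and $\varepsilon > 0$. By the standard density of simple functions in $L^1$ (together with regularity of Lebesgue measure, which lets one approximate a measurable set of finite measure from inside and outside by finite unions of intervals), there is a step function $s = \sum_{j=1}^m a_j \, \mathbf{1}_{[c_j,d_j]}$ with $a \le c_j < d_j \le b$ and $a_j \in \mathbb{R}$ such that $\|f - s\|_{L^1} < \varepsilon/2$.

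The heart of the argument is that each indicator $\mathbf{1}_{[c,d]}$ is an $L^1$-limit of a difference of two scaled, shifted logistic sigmoids. Indeed, for every $x \ne t$ one has $\sigma(\lambda(x-t)) \to \mathbf{1}_{(t,\infty)}(x)$ as $\lambda \to +\infty$, so setting $g_\lambda(x) := \sigma(\lambda(x-c)) - \sigma(\lambda(x-d))$ gives $g_\lambda(x) \to \mathbf{1}_{(c,d)}(x)$ for a.e.\ $x \in [a,b]$. Since $c < d$ implies $0 \le g_\lambda \le 1$ pointwise, the constant $1$ is an integrable dominating function on the bounded interval $[a,b]$, and the dominated convergence theorem yields $\|g_\lambda - \mathbf{1}_{[c,d]}\|_{L^1} \to 0$. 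Choosing one common $\lambda$ large enough to make this error at most $\varepsilon / (2m(|a_j|+1))$ simultaneously for all $j$, and collecting the resulting $2m$ sigmoids with external weights $\pm a_j$ and internal weights $\lambda$ and biases $-\lambda c_j$, $-\lambda d_j$, we obtain a function $G$ of the form \eqref{eq:ANNfun} with $n = 2m$ satisfying $\|s - G\|_{L^1} < \varepsilon/2$. The triangle inequality then gives $\|G - f\|_{L^1} \le \|G - s\|_{L^1} + \|s - f\|_{L^1} < \varepsilon$, as claimed.

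There is no serious obstacle here: the passage from sigmoids to the Heaviside step, which in uniform or $L^\infty$ norm would be problematic near the jump, is harmless in $L^1$ on a bounded domain and is dispatched by dominated convergence; the rest is bookkeeping. The statement is thus essentially a repackaging of the classical density results (cf.\ \cite{cybenko1989approximation,hornik1990universal,hornik1989multilayer}). A non-constructive alternative would follow Cybenko's route via Hahn--Banach: if a finite signed measure on $[a,b]$ annihilated every $\sigma(\alpha x + \beta)$ it would vanish, because the logistic $\sigma$ is discriminatory, so the span of such functions is dense. I would nonetheless prefer the constructive argument above, since the explicit role of the steepness parameter $\lambda$ already foreshadows why resolving sharp boundary layers requires either large internal weights or, correspondingly, a sufficient number of neurons.
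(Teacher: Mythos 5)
Your argument is correct. Note that the paper itself offers no proof of this theorem: it is quoted as a known universal-approximation result with a pointer to the classical literature \cite{cybenko1989approximation,hornik1990universal,hornik1989multilayer}, so the comparison is really with those standard proofs. Your constructive route --- density of step functions in $L^1$, then realizing each indicator $\mathbf{1}_{[c,d]}$ as the $L^1$-limit of $g_\lambda(x)=\sigma(\lambda(x-c))-\sigma(\lambda(x-d))$ --- is sound: monotonicity of $\sigma$ with $c<d$ gives the uniform bound $0\le g_\lambda\le 1$ needed for dominated convergence on the bounded interval, the a.e.\ pointwise limit is the open-interval indicator (which agrees with the closed one in $L^1$), the resulting functions are exactly of the form \eqref{eq:ANNfun} with $\alpha_i=\lambda$, $\beta_i=-\lambda c_j$ or $-\lambda d_j$, and the finite sum over $j$ lets you pick a single sufficiently large $\lambda$. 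This differs from Cybenko's duality argument (Hahn--Banach plus the fact that the logistic sigmoid is discriminatory), which is shorter but non-constructive and gives no information about the parameters; your version has the advantage, directly relevant to this paper, of making explicit that resolving sharp features forces large internal weights $\alpha_i$ or many neurons, which is the heuristic behind the weight-range prescription \eqref{eq:alpha}. One cosmetic point: your threshold $\varepsilon/(2m(|a_j|+1))$ varies with $j$; since $m$ is finite this is harmless, but it reads more cleanly as the uniform bound $\varepsilon/(2m(\max_j|a_j|+1))$.
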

Similar results apply for continuous functions and for the derivatives of differentiable functions \cite{attali1997approximations,hornik1990universal,pinkus1999approximation}.\par
These results open the way to the use of ANN for the numerical solution of differential equations. The most frequent approach when using ANN for this purpose is the minimization of a cost function in order to compute the optimal parameters $\bm{\alpha},\bm{\beta},\bm{w}$. This optimization procedure is referred in ANN as training. The optimization method can be also exploited for the resolution of differential problems because for many of these, the exact jacobian can be computed easily, see e. g. \cite{lagaris1998artificial}. Nevertheless, two main questions raise when training an ANN via optimization: the initial choice of the parameters and the overall computational cost. When dealing with time-dependent problems, the initial choice is related to the previous time step (see e.g. \cite{rudd2015constrained}); in other problems where the main difficulties are related to the geometry of the boundary, the initial choice is efficiently chosen as a boundary lift (see e.g. \cite{berg2018unified,lagaris1998artificial}). In the problem we consider here, there is no such a good indication by the problem itself that may help  to choose ``good" initial guesses. Here, we present a procedure that computes a solution in the framework of ELM networks that do not need training unless a big  accuracy is needed; in the later case it can be used for providing an initial guess for training purposes.
Toward to the above aim, we first show how one can set  the internal weights and biases to get good approximating functions.\par
Consider the function $G$ in \eqref{eq:ANNfun} that depends non-linearly on the internal weights and biases. Let us now take the function: 
\begin{equation}
\sigma_i(x)=\sigma(\alpha_i x + \beta_i)= \frac{1}{1+\text{exp}(-\alpha_i x-\beta_i)}\ .
\label{def:sigmai}
\end{equation}
The first and second derivatives of the above with respect to the independent variable $x$ read:
\begin{equation}
\begin{split}
\sigma_i'(x)=&\alpha_i\frac{\text{exp}(z)}{(1+\text{exp}(z))^2}\\
\sigma_i''(x)=&\alpha_i^2\frac{\text{exp}(z)(1-\text{exp}(z))}{(1+\text{exp}(z))^3},
\end{split}
\label{eq:dersigma}
\end{equation}
where $z=\alpha_i x + \beta_i$. Note, that if one takes two functions $\sigma_i$ where at least one of the parameters $\alpha,\beta$ is different, then these are linearly independent (see \cite{ito1996nonlinearity}).
Finally, the ratio between $\alpha_i$ and $\beta_i$ gives the location of the inflection point, while the internal weights $\alpha_i$ govern the variation of the amplitude of the \textit{S--shape}:
\begin{itemize}
\item $\sigma_i$ has an inflection point at $x=-\dfrac{\beta_i}{\alpha_i}$, that we call the \textit{center} $C_i$ of the sigmoid function;
\item $\sigma_i$ is monotone, $\lim_{x\to -\infty}=0; \lim_{x\to +\infty}=1$ if $\alpha_i$ is positive, the other way if negative. Moreover the range where the values are between 0.05 and 0.95 is $ \left[C_i -\frac{2.945}{\alpha_i},C_i+ \frac{2.945}{\alpha_i}\right]$. We denote this interval as the \textit{S--shape amplitude} $I_S$ of the function.
\end{itemize}
\begin{figure}[th]
\centering
\includegraphics[width=0.45\textwidth]{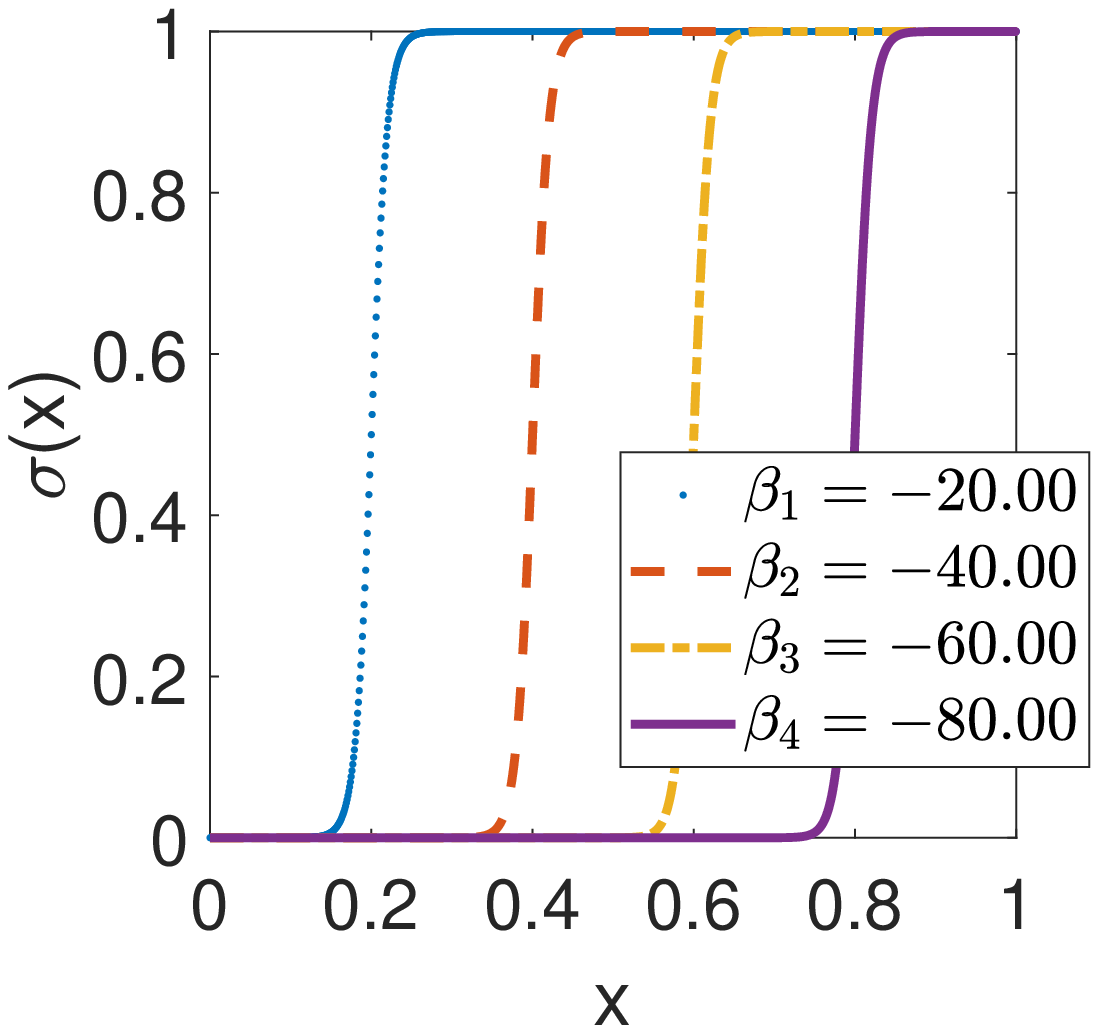} \, \includegraphics[width=0.45\textwidth]{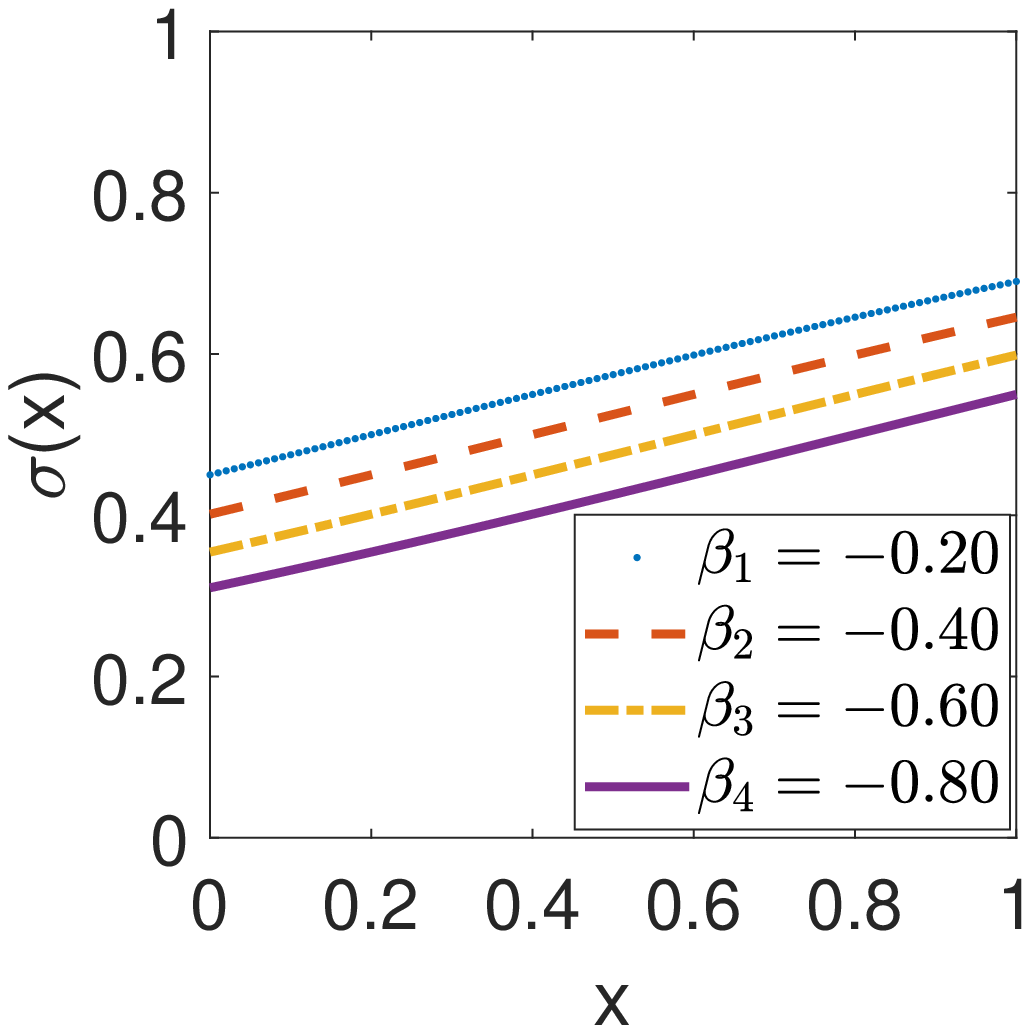}\\
\includegraphics[width=0.45\textwidth]{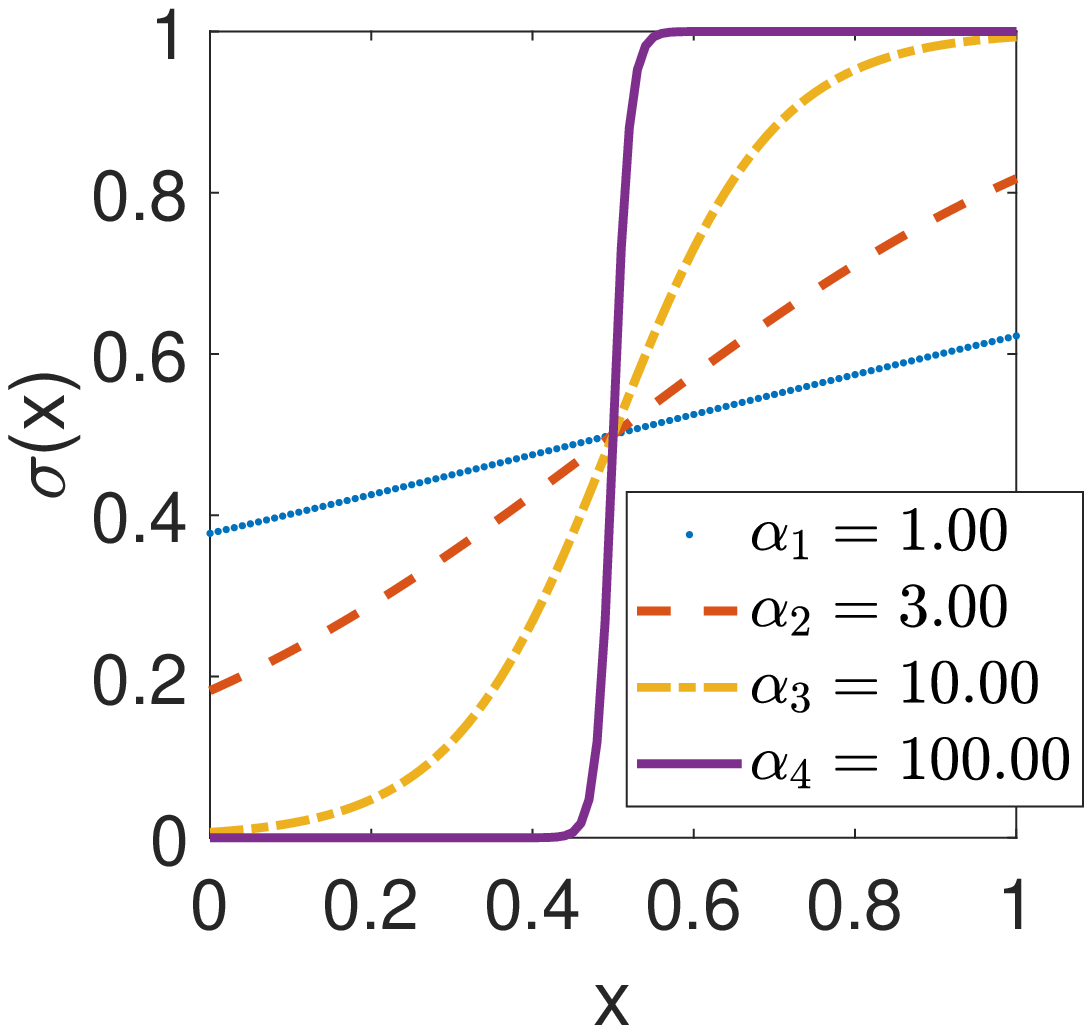}\, \includegraphics[width=0.45\textwidth]{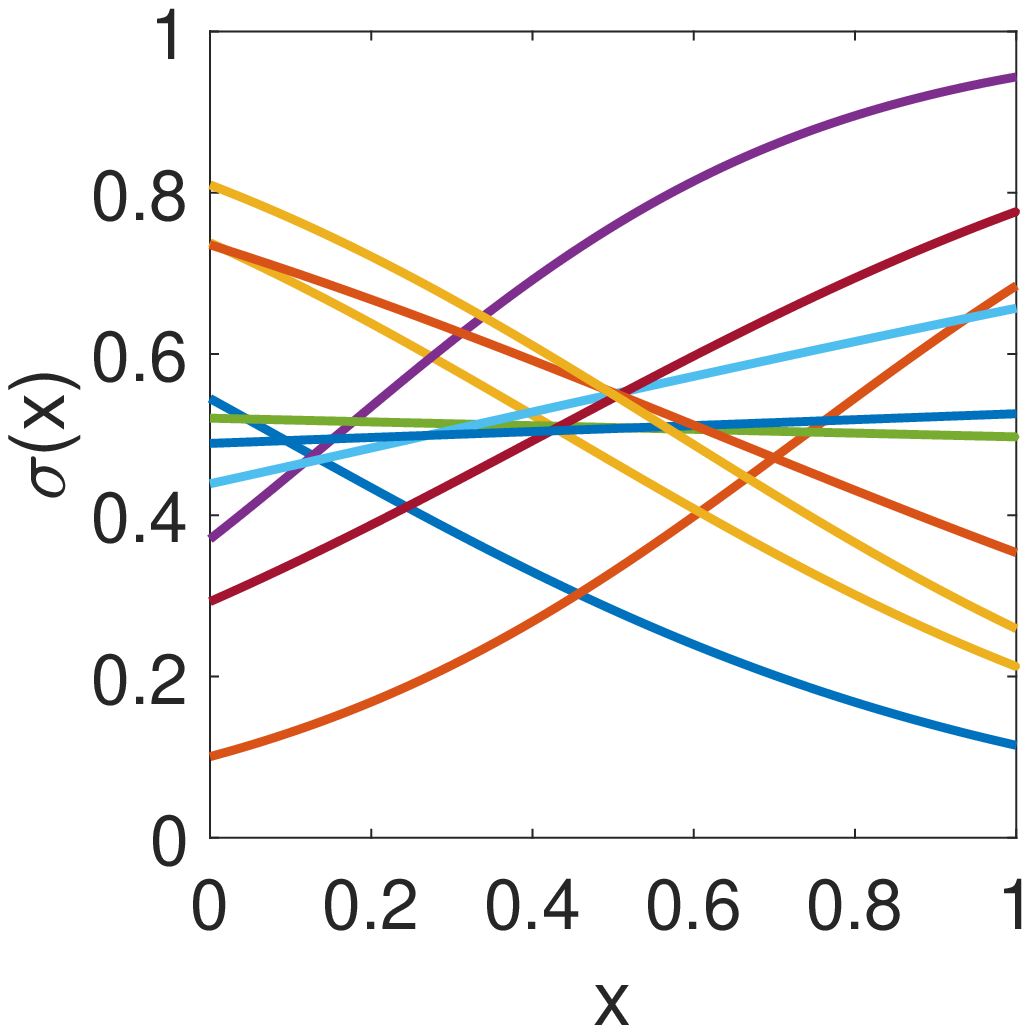}
\caption{The functions $\sigma_i$ of \eqref{def:sigmai} with varying parameters. On the top left panel, we have set $\alpha_i=100$, on the top right, $\alpha_i=1$. On the bottom left functions with a fixed center $C_i=1/2$ are depicted. On the bottom right, a set of 10 functions obtained by varying the coefficients as stated in Section \ref{sect:Model}.}
\label{Fig:fig1}
\end{figure}

If $\alpha_i,\beta_i $ are fixed once randomly, we keep their values constant, without updating them by tuning. This is the case of a ANN network referred as Extreme Learning Machine (ELM) network \cite{huang2011extreme,huang2004extreme,huang2006extreme}. The only free parameters need to be learned are the connections (weights) between the hidden layer and the output layer. The analysis reviewed in \cite{HUANG201532} gives the conditions that are required for ensuring the interpolation property, thus the non-singularity of the collocation matrices; we report this result in Section \ref{Sect:Analysis}.\par 
Outside $I_S$, the function $\sigma_i$ is almost constant; then, we focus on the values of the parameters $\alpha_i,\beta_i$ such that the intersection of the S--shape amplitude $I_S$ and the domain of interest is non-null, thus leading to functions $\sigma_i$ with $C_i$ inside the interval. The choice of $\alpha_i$ is related to the behavior of the derivative of the function. When taking all internal weights $\alpha_i\equiv K$, one obtains Heaviside-like functions if $K$ is big (as used in \cite{hahm2004approximation}), or almost-linear functions if $K$ is small (as used in \cite{guliyev2018approximation,lin2013approximation}). In our case, the use of both kind of functions can help to approximate both steep gradients and global behaviors. In figure \ref{Fig:fig1}, we plot different cases of such functions: on the top panels $\alpha_i$ is a constant, on the bottom left panel $C_i$ is set equal to the mid-point of the interval and finally on the bottom right panel, we plot functions with random values of $\alpha_i,\beta_i$. Note that some functions can have similar shapes: interpolation properties are not changed but the problem can become ill-conditioned. 
\section{The proposed numerical algorithm}\label{sect:Model}
In this section, we propose a simple algorithm to get a numerical approximation of a linear differential problem with the aid of ELM networks with $n$ neurons in the hidden layer. The main idea is to fix the range of values of weights and biases, so that we construct a ``good" basis of $n$ sigmoid functions $\sigma_i$, and use a collocation method to fix the weights of the linear output activation function.\par
For our illustrations, we have chosen a stationary one-dimensional PDE with constant coefficients.
\begin{equation}
\left\{ \begin{array}{l}
-\mu u''(x)+\gamma u'(x) + \lambda u(x)=f(x), \qquad x \in I:= [0,1] \subseteq \mathbb{R}\\
\nu_0 u'(0) + \rho_0 u(0)=g_0\,,\ \nu_1 u'(1) + \rho_1 u(1)=g_1,
\label{eq:ProDiff}
\end{array} \right.
\end{equation}
where $\mu$, $\gamma$ and $\lambda$  are respectively the diffusion, advection and reaction terms. The domain is fixed to $[0,1]$ for convenience. The boundary conditions are in the general Robin form; Dirichlet  and Neumann boundary conditions are derived by setting  $\nu_i=0$  or $\rho_i=0$.

\begin{algorithm}[th]
\SetAlgoLined
\textbf{Input: } coefficients $\mu, \gamma, \lambda; \nu, \rho $ \ ;\ number of neurons $n$, number of collocation points $M$, function evaluations $[\bm{f} , \bm{g}] \in \mathbb{R}^M$\\
 1. set $\bm{\alpha} \in \mathbb{R}^n$ randomly according to \eqref{eq:alpha} \;
 2. set $\bm{\beta}\in \mathbb{R}^n$ such that $C_i$ are uniformly distributed, i.e. $\beta_i=-\alpha_i*C_i$\;
 3. compute matrix $\mathbb{S}$ according to \eqref{eq:stiff}-\eqref{eq:M012} \;
 4. compute $\mathbb{B}$ according to \eqref{eq:boundary}-\eqref{eq:B01} \;
 5. solve the (eventually under/over -determined) linear system $ [ \mathbb{S};\mathbb{B}] \bm{w}= [\bm{f};\bm{g}]$ \;
\textbf{Output: }weights $\bm{w} \in \mathbb{R}^n$ that fix the ELM approximate solution $\tilde{u}(x)$ in \eqref{eq:ansatz};
 \caption{Pseudo-code that describe the ELM collocation method}
\end{algorithm}

We consider an approximate solution $\tilde{u}(x,\bm{w})$ given by the ELM:
\begin{equation}
\tilde{u}(x; \bm{w})=\sum_{i=1}^n w_i \sigma_i(x).
\label{eq:ansatz}
\end{equation}
We choose $\bm{\alpha}$ to be  random uniformly distributed in a range that depends on the number of neurons $n$; if we fix the domain of the differential problem to have unitary length, we choose:
\begin{equation}\label{eq:alpha}
\bm{\alpha} := {\text{rand}\left(\left[-\frac{n-10}{10}-4,\frac{n-10}{10}+4\right]\right)} \ .
\end{equation}
In this way, if $n=10$ we have a maximum weight e.g. $4$ and as $n$ grows by 10 then the maximum weight grows by $1$. The biases $\bm{\beta}$ are fixed so that the centers $C_i=-\beta_i/\alpha_i$ of the sigmoid functions are located on random uniformly distributed points inside the interval.\par
Then, the ELM network is collocated in \eqref{eq:ProDiff} on $M-2$ equidistant collocation points $x_j \in (0,1)$ getting:
\begin{equation}
-\mu \tilde{u}''(x_j)+\gamma \tilde{u}'(x_j) + \lambda \tilde{u}(x_j)=f(x_j) \qquad \text{for } j=2,\dots,M-1.
\label{eq:DTRcollocation}
\end{equation}
Analytical derivatives of the sigmoid function $\sigma_i$ can be computed by \eqref{eq:dersigma}. Then:
\begin{equation*}
-\mu \sum_{i=1}^n w_i \sigma_i''(x_j)+\gamma \sum_{i=1}^n w_i \sigma_i'(x_j) + \lambda \sum_{i=1}^n w_i \sigma_i(x_j)=f(x_j) \ \text{for } j=2,\dots,M-1.
\end{equation*}
Finally, we observe that we can write separately the external weights $\bm{w}$ and write this in a matrix form as:
\begin{equation}
\mathbb{S}\bm{w} :=(-\mu \mathbb{S}^{(2)}+\gamma \mathbb{S}^{(1)} + \lambda \mathbb{S}^{(0)})\bm{w} = \bm{f},
\label{eq:stiff}
\end{equation}
where $\mathbb{S}_2$, $\mathbb{S}_1$ and $\mathbb{S}_0$ are matrices of elements:
\begin{equation}
\begin{split}
\mathbb{S}^{(2)}=\bigg(\sigma_i''(x_j)\bigg)_{i,j} \ , \ \mathbb{S}^{(1)}=\bigg(\sigma_i'(x_j)\bigg)_{i,j} \ \text{and} \ \mathbb{S}^{(0)}=\bigg(\sigma_i(x_j)\bigg)_{i,j} \\
i=1,\dots, n \,,\ j=2,\dots, M-1
\end{split}
\label{eq:M012}
\end{equation}
For the boundary conditions defined in \eqref{eq:ProDiff}, we augment the system by two equations collocating on the boundary points $0$ and $1$
\begin{equation*}
\nu \sum_{i=1}^n w_i \sigma_i'(x_{k}) + \rho \sum_{i=1}^n w_i \sigma_i(x_{k})=g(x_{k}) \qquad
 k=0,M \,;\ x_1=0, x_M=1\ .
\label{eq:DNRcollocation}
\end{equation*}
As before, we rewrite the above in a matrix form:
\begin{equation}
\mathbb{B} \bm{w}:= (\nu \mathbb{B}^{(1)}+\rho \mathbb{B}^{(0)})\bm{w}  = \bm{g} \,,
\label{eq:boundary}
\end{equation}

where $\mathbb{B}^{(1)}$ and $\mathbb{B}^{(0)}$ are matrices of elements:
\begin{equation}
\begin{split}
\mathbb{B}^{(1)}=\bigg(\sigma_i'(x_{k})\bigg)_{i,{k}} \ \text{and} \ \mathbb{B}^{(0)}=\bigg(\sigma_i(x_{k})\bigg)_{i,{k}} \\
i=1,\dots, n \,,\  k=0,M \ .
\end{split}
\label{eq:B01}
\end{equation}
To this end, we have a linear system of $M$ equations and $n$ unknowns. The overall system can be constructed to be over-determined ($M>n$), under-determined ($M<n$) or squared ($M=n$). 
In the over- and under-determined cases, we consider the solution to be the one obtained in the least-square sense (in the under-determined problem the minimum-norm regularized solution). In the next section, we show that the under-determined solutions reach the same accuracy when compared with squared or over-determined cases. Moreover, we observe that the squared case can lead to ill-conditioned linear problems, so we propose under-determined collocation. 

\section{Analysis of the collocation approximation}\label{Sect:Analysis}
In this section, building on previous works \cite{auricchio2012isogeometric,hu2006collocation,huang2006extreme}, we prove the consistency of the proposed ELM collocation method. First, based on the Theorem 2.1 presented in \cite{huang2006extreme}, we state the following Theorem that fits to our proposed framework. 
\begin{thm}\label{Thm1}
Let $(x_i,y_i)\,,\ i=1,\dots, M $ be a set of points such that $x_{i}<x_{i+1}$, and take the ELM network with $n<M$ neurons $\tilde{u}(x; \bm{w})$ in \eqref{eq:ansatz} such that the internal weights $\bm{\alpha}$ and the biases $\bm{\beta}$ are randomly generated independently from the data according to any continuous probability distribution. Then, $\forall \varepsilon >0$, there exists a choice of $\bm{w}$ such that $\| (\tilde{u}(x_i; \bm{w}) - y_i)_i\| <\varepsilon $ \B. Here $\| \cdot\| $ denotes the $L^2$ Euclidean norm of vectors, the analogous of the Frobenius norm.\par
Moreover, if $n=M$ then  $\bm{w}$  can be found such that  $\| (\tilde{u}(x_i; \bm{w}) - y_i)_i \| =0 $.
\end{thm}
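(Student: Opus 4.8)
\emph{Proof plan.} My plan is to reduce both assertions to a single statement about the rank of the hidden-layer (interpolation) matrix. Set $\bm y=(y_1,\dots,y_M)^{\top}$ and let $H\in\mathbb{R}^{M\times n}$ have entries $H_{ij}=\sigma_j(x_i)=\sigma(\alpha_j x_i+\beta_j)$, so that $\tilde u(x_i;\bm w)=(H\bm w)_i$ and the sampled residual for the ansatz \eqref{eq:ansatz} is exactly $H\bm w-\bm y$; hence $\inf_{\bm w}\bigl\|(\tilde u(x_i;\bm w)-y_i)_i\bigr\|=\operatorname{dist}\bigl(\bm y,\operatorname{range}(H)\bigr)$. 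The claim to establish is that \emph{with probability one, over the continuous law of $(\bm\alpha,\bm\beta)$, the matrix $H$ has full rank $\min(n,M)$}. Once this is in hand the theorem is immediate: for $n=M$ it says $H$ is invertible, so $\bm w:=H^{-1}\bm y$ makes the residual vanish; for $n<M$ it says the least-squares problem has a unique solution with residual $\operatorname{dist}(\bm y,\operatorname{range}(H))$, which one drives below any prescribed $\varepsilon$ by enlarging $n$ up to $M$ --- this is the incremental-approximation content of \cite{huang2006extreme}, and also follows from the continuous-function version of Theorem~2.1 applied to, say, the piecewise-linear interpolant of the data, which the network reproduces to within an arbitrary uniform tolerance.

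The rank statement I would prove by an induction that adjoins the neurons one at a time. Suppose the first $k<\min(n,M)$ columns $\bm c_1,\dots,\bm c_k\in\mathbb{R}^M$ of $H$ are linearly independent, and set $V_k=\operatorname{span}\{\bm c_1,\dots,\bm c_k\}$, a proper subspace of $\mathbb{R}^M$. The key claim is that, conditionally on $\bm\alpha$ and on $\beta_1,\dots,\beta_k$, the map $\beta\mapsto\bm c_{k+1}(\beta):=(\sigma(\alpha_{k+1}x_i+\beta))_{i=1}^{M}$ lies in $V_k$ only for $\beta$ in a countable set. Granting this, since $\beta_{k+1}$ is drawn from a continuous, hence non-atomic, distribution it a.s.\ avoids that set, so $\bm c_1,\dots,\bm c_{k+1}$ are a.s.\ independent, and the induction yields $\operatorname{rank}(H)=\min(n,M)$ a.s.\ (if the law of $(\bm\alpha,\bm\beta)$ is not a product measure one first disintegrates, using that the conditional laws of the single coordinates $\beta_j$ are still non-atomic; in the scheme of Section~\ref{sect:Model} the centers, hence the $\beta_j$, are drawn independently, so this is automatic). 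To prove the claim I would pick $\bm\lambda\in\mathbb{R}^M\setminus\{0\}$ orthogonal to $V_k$ (possible as $k<M$); if $\bm c_{k+1}(\beta)\in V_k$ on a set with an accumulation point, then $\phi(\beta):=\bm\lambda^{\top}\bm c_{k+1}(\beta)=\sum_{i=1}^{M}\lambda_i\,\sigma(\alpha_{k+1}x_i+\beta)$ vanishes there; but $\sigma$ is real-analytic on $\mathbb{R}$, so $\phi$ is real-analytic and must vanish identically. Since a.s.\ $\alpha_{k+1}\ne0$ and the $x_i$ are distinct, the shifts $\alpha_{k+1}x_1,\dots,\alpha_{k+1}x_M$ are distinct, and translates $\beta\mapsto\sigma(\beta+c)$ of the logistic function with distinct $c$ are linearly independent (see \cite{ito1996nonlinearity}; or differentiate $\phi$ once and Fourier-transform in $\beta$, using that $\widehat{\sigma'}$ vanishes nowhere). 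Hence all $\lambda_i=0$, a contradiction, so $\{\beta:\bm c_{k+1}(\beta)\in V_k\}$ is the zero set of a non-trivial real-analytic function of one variable, i.e.\ discrete and countable.

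The step I expect to be the crux is precisely this last reduction to a single variable. It is tempting to argue globally --- some $\min(n,M)$-by-$\min(n,M)$ minor of $H$ is a non-trivial real-analytic function of the whole parameter vector $(\bm\alpha,\bm\beta)\in\mathbb{R}^{2n}$ (non-triviality can be checked by placing the centers $C_j$ between the data points with $|\alpha_j|$ large, which makes $H$ nearly triangular), hence it vanishes only on a Lebesgue-null set --- but this would be \emph{insufficient} under the bare hypothesis of a ``continuous'' (possibly singular-continuous) distribution, which may well be carried by a null set. The argument must therefore vary one neuron at a time, so that the exceptional parameter set is not merely Lebesgue-null but genuinely countable and hence negligible for \emph{every} non-atomic law; and that, in turn, rests on the linear independence of translates of the sigmoid, the one non-elementary ingredient.
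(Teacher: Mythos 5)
Your proposal is correct, but it is worth saying up front that the paper does not actually prove Theorem~\ref{Thm1}: it states it as a direct consequence of Theorem~2.1 of \cite{huang2006extreme} and moves on. What you have written is therefore a self-contained reconstruction of the cited argument rather than an alternative to a proof the paper contains. Your route is the standard one in the ELM literature --- reduce everything to the almost-sure full rank of the hidden-layer matrix $H$, and establish that rank by adjoining one neuron at a time --- and your execution is in fact more careful than the source in two respects. First, your reduction to a one-variable problem (fixing everything except $\beta_{k+1}$, taking $\bm\lambda\perp V_k$, and observing that $\beta\mapsto\sum_i\lambda_i\sigma(\alpha_{k+1}x_i+\beta)$ is a non-trivial real-analytic function, hence has a discrete zero set) correctly yields a \emph{countable} exceptional set, which is negligible for any non-atomic law; a global ``the bad minor vanishes on a Lebesgue-null set'' argument would not suffice under the bare hypothesis of a continuous distribution, and you rightly flag this. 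The non-triviality step rests on the linear independence of distinct translates of the logistic function, which is exactly the ingredient the paper imports from \cite{ito1996nonlinearity}, and your Fourier argument via the nowhere-vanishing $\widehat{\sigma'}$ is a clean way to verify it. Second, you correctly identify that the first assertion of the theorem, read literally with $n<M$ \emph{fixed}, cannot hold for arbitrary $\varepsilon$: the infimum of the sampled residual is the fixed distance from $\bm y$ to the $n$-dimensional subspace $\operatorname{range}(H)$. Your repair --- that $\varepsilon$-accuracy is achieved by letting $n$ grow toward $M$ --- is precisely the content of Theorem~2.2 of \cite{huang2006extreme} and is evidently the intended reading, since the paper immediately uses the result in exactly this incremental way (the nested spaces $\mathbb{E}^{(n)}$ and the passage to Theorem~\ref{cor1}). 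The only caveat to record is the one you already note: the single-coordinate conditioning needs the conditional laws of the $\beta_j$ to remain non-atomic, which holds for the independent sampling of Section~\ref{sect:Model} but is not literally guaranteed by ``any continuous probability distribution'' on the joint vector.
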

The above theorem states that if the number of hidden neurons are at least equal to the number of data points, then the interpolation error is zero.
Starting from the above theorem, we consider the properties of ELM network for the approximation of functions. To do this, we consider the construction of a sequence of nested ELM families: $ \mathbb{E}^{(n)}:= \{\tilde{u}(x; \bm{w})\,,\ \bm{w} \in \mathbb{R}^n \} \subset \mathbb{E}^{(n+1)}:= \{\tilde{u}(x; \bm{w})\,,\ \bm{w} \in \mathbb{R}^{n+1} \}$,  where we add one neuron at time.\par

Based on the above theorem, we can now state the following theorem (see e.g. Theorem 2 of \cite{HUANG201532}). 
\begin{thm}  \label{cor1} 
Let $\phi(x)$ be a continuous function. Then there exist a sequence of ELM network functions $\tilde{u}^{(n)}\in \mathbb{E}^{(n)} $ such that: 
\begin{equation*}
 \| \phi - \tilde{u}^{(n)} \|_{L^2} \to  0    \,, 
\end{equation*}
where  by $\|\cdot\|_2$ we denote the $L^2$ norm. 
\end{thm}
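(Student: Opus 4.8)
The cleanest route, given the nested families $\mathbb{E}^{(n)}\subset\mathbb{E}^{(n+1)}$ already introduced, is to take $\tilde u^{(n)}$ to be a best $L^2(I)$ approximation of $\phi$ from $\mathbb{E}^{(n)}$ (the orthogonal projection of $\phi$ onto the finite-dimensional subspace $\mathbb{E}^{(n)}$), so that $\|\phi-\tilde u^{(n)}\|_{L^2}=\mathrm{dist}_{L^2}(\phi,\mathbb{E}^{(n)})$. Since $\phi$ is continuous on the bounded interval $I$ it belongs to $L^2(I)$, and the numbers $\mathrm{dist}_{L^2}(\phi,\mathbb{E}^{(n)})$ form a non-increasing, non-negative sequence; hence they converge, with limit $\mathrm{dist}_{L^2}\big(\phi,\ \overline{\bigcup_{n}\mathbb{E}^{(n)}}\big)$. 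So the statement reduces to showing that $\bigcup_n\mathbb{E}^{(n)}$ — which is exactly the linear span of the (infinite, randomly generated) family of hidden units $\{\sigma_i\}_{i\ge1}$ — is dense in $L^2(I)$.

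To prove this density I would proceed in two steps. First, recast the classical universal approximation property: by Theorem 2.1 and the analogous sup-norm statements for continuous functions (Cybenko/Hornik, already cited), finite linear combinations of the dictionary $\mathcal D=\{x\mapsto\sigma(\alpha x+\beta):\alpha\in\R\setminus\{0\},\ \beta\in\R\}$ are dense in $C(I)$, hence in $L^2(I)$; equivalently, no nonzero $e\in L^2(I)$ is orthogonal to every element of $\mathcal D$. Second, show $\overline{\operatorname{span}}\{\sigma_i\}_{i\ge1}\supseteq\overline{\operatorname{span}}\,\mathcal D$. For this it suffices that for every target pair $(\alpha^\ast,\beta^\ast)$ and every $\delta>0$ some $\sigma_i$ lies within $\delta$ of $\sigma(\alpha^\ast\cdot+\beta^\ast)$ in $L^2(I)$: because $\sigma$ is Lipschitz, $(\alpha,\beta)\mapsto\sigma(\alpha\cdot+\beta)$ is continuous into $L^2(I)$, so it is enough that the random parameters $\{(\alpha_i,C_i)\}_{i\ge1}$ be almost surely dense in $(\R\setminus\{0\})\times I$. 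This in turn follows from the sampling rule \eqref{eq:alpha}: as one keeps appending neurons the admissible range for $\alpha_i$ grows without bound, so for a fixed target and radius $\delta$ the events ``$(\alpha_i,C_i)$ falls in the $\delta$-ball about the target'' eventually have probabilities comparable to $\delta^2/i$, hence non-summable; by independence and the second Borel--Cantelli lemma they occur infinitely often, and intersecting over a countable dense set of targets and $\delta\downarrow0$ gives the a.s. density.

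An equivalent, more constructive route — closer to Theorem 2 of \cite{HUANG201532} and to the incremental spirit of the nested families — is the greedy one: set $e_0=\phi$, and having $\tilde u^{(n)}$ with residual $e_n=\phi-\tilde u^{(n)}$, form $\tilde u^{(n+1)}\in\mathbb{E}^{(n+1)}$ by re-optimizing the output weights. Since the single-coefficient update $w_{n+1}=\langle e_n,\sigma_{n+1}\rangle/\|\sigma_{n+1}\|_{L^2}^2$ is admissible, one gets $\|e_{n+1}\|_{L^2}^2\le\|e_n\|_{L^2}^2-|\langle e_n,\sigma_{n+1}\rangle|^2/\|\sigma_{n+1}\|_{L^2}^2$, so $\|e_n\|_{L^2}$ decreases to some $\ell\ge0$ with summable decrements. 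If $\ell>0$ then, by the classical density above, $e_n$ is not orthogonal to $\mathcal D$; since $(\alpha,\beta)\mapsto\langle e_n,\sigma(\alpha\cdot+\beta)\rangle$ is real-analytic and not identically zero its zero set is Lebesgue-null, so a.s. each decrement is strictly positive, and a Borel--Cantelli argument — using that the $e_n$ stay bounded and that a random node is, infinitely often, well aligned with the optimal descent direction of the current residual — forces the decrements to be bounded away from $0$ infinitely often, a contradiction; hence $\ell=0$.

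I expect the genuine obstacle in either route to be the same: controlling the interplay between the random internal parameters and the approximation target, i.e. proving the a.s. density of the random dictionary (first route) or the a.s. vanishing of the residual (second route). The monotonicity and the reduction to a density statement are routine Hilbert-space bookkeeping, and the classical universal approximation theorem is quoted; the real work sits in the Borel--Cantelli-type argument that the randomly drawn hidden units eventually populate the parameter space densely enough. As a discrete consistency check, Theorem \ref{Thm1} already gives zero error once $n$ reaches the number of sample points, which is the finite-dimensional shadow of the statement being proved.
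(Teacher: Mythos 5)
Your route is genuinely different from the paper's. The paper proves Theorem \ref{cor1} by invoking Theorem \ref{Thm1} with $n=M$: on a nested family of point sets whose diameter $H$ tends to zero it takes $\tilde{u}^{(n)}$ to be the ELM that interpolates $\phi$ exactly at those points, observes that the vector of nodal errors vanishes, and then asserts that this ``extends'' to convergence in $L^2$. You instead take $\tilde{u}^{(n)}$ to be the orthogonal projection of $\phi$ onto $\mathbb{E}^{(n)}$ and reduce everything to the density of $\bigcup_n\mathbb{E}^{(n)}$ in $L^2(I)$, attacked via the classical universal approximation theorem plus a Borel--Cantelli argument showing that the random parameters are a.s.\ dense. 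Your skeleton is in some respects more honest: the paper's passage from ``zero error at nodes of shrinking diameter'' to ``$L^2$ convergence of the functions'' is exactly the kind of step that fails for, say, polynomial interpolation, and no equicontinuity or stability estimate is supplied to justify it; your monotone-projection reduction avoids that issue and correctly isolates where the probabilistic content lives.

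There is, however, one concrete gap in your density step. You ask that $\{(\alpha_i,C_i)\}$ be a.s.\ dense in $(\mathbb{R}\setminus\{0\})\times I$ and conclude $\overline{\operatorname{span}}\{\sigma_i\}\supseteq\overline{\operatorname{span}}\,\mathcal{D}$, but $\mathcal{D}$ contains sigmoids with arbitrary bias $\beta\in\mathbb{R}$, i.e.\ arbitrary centers $C=-\beta/\alpha\in\mathbb{R}$, whereas the sampling rule of Section \ref{sect:Model} confines the centers to $I$. Density of the parameters in $(\mathbb{R}\setminus\{0\})\times I$ therefore does not reach all of $\mathcal{D}$, and the inclusion as written fails for targets with center outside $I$. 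The conclusion survives, but you must show directly that the center-restricted sub-dictionary $\{\sigma(\alpha(x-C)):\alpha\neq 0,\ C\in I\}$ already spans a dense subspace of $L^2(I)$ --- for instance because large $|\alpha|$ with $C\in I$ gives $L^2$-approximations of Heaviside steps at every point of $I$, small $|\alpha|$ gives near-constants, and finite combinations of step functions are $L^2$-dense. With that repair your first route is complete modulo the Borel--Cantelli details you yourself flag as the real work (and which do go through for the specific rule \eqref{eq:alpha}, though not for an arbitrary ``continuous probability distribution'' whose support may be too small). The second, greedy route is weaker as sketched: the claim that the decrements are ``bounded away from $0$ infinitely often'' requires a lower bound on the correlation of a random node with the residual that is uniform in $n$ even though $e_n$ changes at every step, and that is not established.
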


To prove the above, we consider a family of nested sets of ordered distinct points $\{x^{(M)}_i \,,\ i=1,\dots,M\} \subset \{x^{(M+1)}_i \,,\ i=1,\dots,M+1\}$ and call diameter of the set of points $H>0$ the maximum $H=max_{i=1, \dots, M-1}(x^{(M)}_{i+1}-x^{(M)}_{i})$. Then take in the space $\mathbb{E}^{(n)}$ and call $\tilde{u}^{(n)}$ the ELM network  that interpolates $\phi$ on a set of points of diameter $H$: from Theorem \ref{Thm1} this construction exists. Finally, we can say that  $\| (\phi(x_i) - \tilde{u}^{(n)}_H (x_i) )\| \to _{ H\to 0 } 0 $, using the vector norm: theorem \ref{cor1} extends the convergence to a general continuous function $\phi$ in the $L^2$ sense, thus states that the finite-dimensional discrete space of ELM networks is consistent in the $L^2$ setting.\par
Theorem \ref{Thm1} and \ref{cor1} also imply that the collocation matrices $\mathbb{S}^{(0)}$ are of full rank.\par 
We should note that for the problems that we consider here, in some cases we obtain ill-conditioned matrix problems. In these cases the solution of the under-determined collocation (number of neurons greater then number of points) with least squares is more suitable.\par
The collocation solution can now be sought as a solution of a variational formulation tested against delta-type functions or, equivalently, a discretized formulation of the problem with $C^2$ test functions. We follow the classic approach, see e.g. \cite{auricchio2012isogeometric,strang1973analysis} and write the weak formulation for the equation \eqref{eq:ProDiff} that seeks a $u(x)$ in the trial space (denoted by $U$) such that:
\begin{equation*}
\int_{0}^1\left[ -\mu u^{\prime \prime} (x) + \gamma u^\prime (x) + \lambda u (x) \right]\, v(x) \ dx = \int_{0}^1 f(x) v(x) \ dx
\end{equation*}
for all test function $v\in V$. For simplicity, we suppose that the boundary conditions are imposed on the space $U$ and are satisfied in all the subspaces introduced.\par
The general Galerkin method is a weak formulation as above where we look for a function $\tilde{u}_n \in \mathbb{U}_n$ for which the equations hold true $\forall v_m \in \mathbb{V}_m$ and both $\mathbb{U}_n$ and $\mathbb{V}_m$ are finite-dimensional. In our case, we fix $\mathbb{U}_n =\mathbb{E}^{(n)}$, i.e. the ELM space with $n$ neurons, so that the degrees of freedom of the finite dimensional space are the weights $\bm{w} \in \mathbb{R}^{n}$.\par
Now, consider the test space to  $\mathbb{V}_m=span\{ \delta_{i,\varepsilon }(x) \,,\ i=1,\dots, m \}$ defined starting from a set of $m=n$ distinct points $\{ x_i \}$. 
We also consider function $f$ to be approximated by its interpolant in the space of ELM, i.e. $\tilde{f}\approx f\,,\ \tilde{f}\in \mathbb{E}^{(n)}\,,\ \tilde{f}(x_i)=f(x_i)$.
Then the Galerkin problem reads:
\begin{equation}
\begin{split}
\int_{0}^1\left[ -\mu \tilde{u}_{\varepsilon }^{\prime \prime} (x) + \gamma \tilde{u}_{\varepsilon }^\prime (x) + \lambda \tilde{u}_{\varepsilon } (x) \right]\, \delta_{x_i,\varepsilon }(x) \ dx = \int_{0}^1 \tilde{f}(x) \delta_{x_i,\varepsilon }(x) \ dx \\ \quad \forall i=1,\dots, M
\end{split}
\label{eq:Gal_2}
\end{equation}
where functions $\delta$ are defined by
\begin{gather*}
\delta_{x_i,\varepsilon }(x) = \phi( |x-x_i| /\varepsilon )\,,   \\ 
\phi(\zeta)=\left\{\begin{array}{ll}  \dfrac{exp\{1/(\zeta^2-1)\}}{ \int exp\{1/(\zeta^2-1)\} \, d\zeta }  & \zeta\in [0,1) \\ 0 & \zeta\in [1,+ \infty) \end{array} \right.  .
\end{gather*}
This Galerkin problem is related to the above collocation problem by the following theorem. 
\begin{thm}\label{Thm2}
Let $\tilde{u}_{n,\varepsilon } (x)$ be a solution to the Galerkin problem  \eqref{eq:Gal_2}. Then $\lim_{\varepsilon\to 0 } \tilde{u}_{n,\varepsilon } (x) = \tilde{u}_n (x)$ where $\tilde{u}_n (x)$ is the solution in the ELM space of the collocation problem \eqref{eq:DTRcollocation} on the points $\{x_i\}$:
\begin{equation}
-\mu \tilde{u}_n^{\prime \prime}(x_i)+\gamma \tilde{u}_n^{\prime}(x_i) + \lambda \tilde{u}_n (x_i)=\tilde{f}(x_i) \qquad \text{for } i=1,\dots,M.
\label{eq:coll_n}
\end{equation} 
\end{thm}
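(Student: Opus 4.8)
The plan is to show that as $\varepsilon\to 0$ the mollifier $\delta_{x_i,\varepsilon}$ acts as an approximate identity, so that each integral in \eqref{eq:Gal_2} converges to the pointwise evaluation of its integrand at $x_i$, and hence the Galerkin equations degenerate exactly into the collocation equations \eqref{eq:coll_n}. First I would record the two basic properties of the family $\{\delta_{x_i,\varepsilon}\}$: each $\delta_{x_i,\varepsilon}$ is nonnegative, supported in $[x_i-\varepsilon,x_i+\varepsilon]$, and (by the normalization built into $\phi$) has unit integral over $\mathbb{R}$; moreover $\phi$ is a standard bump function, so $\delta_{x_i,\varepsilon}\in C^\infty$. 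These are precisely the hypotheses of the classical mollifier lemma: if $h$ is continuous in a neighbourhood of $x_i$, then $\int_0^1 h(x)\,\delta_{x_i,\varepsilon}(x)\,dx \to h(x_i)$ as $\varepsilon\to 0$ (for $x_i$ in the interior; at the endpoints one uses that $\phi$ is supported on $[0,1)$ and the one-sided continuity, or one simply restricts attention to interior collocation points as in \eqref{eq:DTRcollocation}).

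Next I would apply this to the right-hand side with $h=\tilde f$, which is an element of $\mathbb{E}^{(n)}$ and therefore $C^\infty$, giving $\int_0^1 \tilde f(x)\,\delta_{x_i,\varepsilon}(x)\,dx \to \tilde f(x_i)$. For the left-hand side I would fix a candidate solution: since the collocation matrix $\mathbb{S}^{(0)}$ (and more generally $\mathbb{S}$) has full rank by Theorems~\ref{Thm1} and \ref{cor1}, the collocation problem \eqref{eq:coll_n} has a unique solution $\tilde u_n\in\mathbb{E}^{(n)}$ with weight vector $\bm{w}_n$. Writing the Galerkin solution as $\tilde u_{n,\varepsilon}=\sum_i w_i(\varepsilon)\sigma_i$, the left-hand side of \eqref{eq:Gal_2} is $\sum$ over $i$ of $w_i(\varepsilon)$ times $\int_0^1 [-\mu\sigma_i'' + \gamma\sigma_i' + \lambda\sigma_i]\,\delta_{x_j,\varepsilon}\,dx$, and since $\sigma_i\in C^\infty$ each such integral converges to $(-\mu\sigma_i''+\gamma\sigma_i'+\lambda\sigma_i)(x_j)$, i.e. to the entry $\mathbb{S}_{ij}$. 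Thus \eqref{eq:Gal_2} reads $\mathbb{S}(\varepsilon)\,\bm{w}(\varepsilon) = \bm{f}(\varepsilon)$ with $\mathbb{S}(\varepsilon)\to\mathbb{S}$ and $\bm{f}(\varepsilon)\to\tilde{\bm{f}}$ entrywise; since $\mathbb{S}$ is invertible, for $\varepsilon$ small $\mathbb{S}(\varepsilon)$ is invertible too, and $\bm{w}(\varepsilon)=\mathbb{S}(\varepsilon)^{-1}\bm{f}(\varepsilon)\to \mathbb{S}^{-1}\tilde{\bm{f}}=\bm{w}_n$ by continuity of matrix inversion. Since the map $\bm{w}\mapsto\sum_i w_i\sigma_i(x)$ is continuous (indeed linear on a finite-dimensional space), this yields $\tilde u_{n,\varepsilon}(x)\to\tilde u_n(x)$, which is the claim.

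The main obstacle is the justification of passing the limit inside the integral for the second-derivative term: one must know a priori that the Galerkin solution $\tilde u_{n,\varepsilon}$ stays in a fixed finite-dimensional space $\mathbb{E}^{(n)}$ (so that only the fixed smooth basis functions $\sigma_i$, and not $\tilde u_{n,\varepsilon}$ itself, are hit by the mollifier) and that the coefficients $w_i(\varepsilon)$ do not blow up. Both follow from the full-rank property of $\mathbb{S}$ inherited from Theorems~\ref{Thm1} and \ref{cor1}, via the perturbation argument above; the remaining work — continuity of $x\mapsto\delta_{x_i,\varepsilon}(x)$ and the convergence $\int h\,\delta_{x_i,\varepsilon}\to h(x_i)$ — is the routine mollifier estimate, split into the mass concentrating near $x_i$ plus a uniform-continuity bound on $h$ over the shrinking support, and I would only sketch it.
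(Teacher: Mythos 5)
Your proposal is correct and follows essentially the same route as the paper, which disposes of this theorem in one sentence by invoking the convergence of the mollifiers $\delta_{x_i,\varepsilon}$ to Dirac distributions (so that the Galerkin integrals converge to point evaluations) and deferring details to the cited isogeometric-collocation reference. You usefully fill in the step the paper leaves implicit — that convergence of the linear systems $\mathbb{S}(\varepsilon)\bm{w}(\varepsilon)=\bm{f}(\varepsilon)$ entrywise, together with invertibility of the limit matrix $\mathbb{S}$, forces convergence of the weights and hence of the ELM functions — which does require the full-rank/invertibility assumption you correctly flag.
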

This result follows from convergence of the integrals to evaluations of function on sites, due to the convergence of $\delta$ functions to $\delta$ distributions; a proof of this can be found in  \cite{auricchio2012isogeometric}.\par
Based on the Theorem 2.2 presented in \cite{strang1973analysis}, we are now ready to prove the following convergence theorem for our proposed scheme.
\begin{thm}\label{Thm3}
Let  $\{x^{(n)}_i \,,\ i=1,\dots,n\}_{n\in\mathbb{N}}$ be an increasing family of collocation points such that the diameter $H$ tends to zero. Moreover take $\mathbb{E}^{(n)}$ a family of ELM.  where we assume that matrix $\mathbb{S}$ of equation  \eqref{eq:stiff} constructed on the collocation points is full rank and uniformly invertible. \\
Then, if $\tilde{u}_{n } \in \mathbb{E}^{(n)} $ is ELM network solution to the collocation problem \eqref{eq:coll_n} and $u(x)$ is the solution to equation \eqref{eq:ProDiff}, we have that:
\begin{equation*}
\lim_{n\to \infty} \| \tilde{u}_{n} - u\|_{L^2} = 0 
\end{equation*}
\end{thm}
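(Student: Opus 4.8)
The plan is to run the classical \emph{consistency plus stability implies convergence} argument, in the guise of a Strang-type quasi-optimality estimate. The three ingredients I would use are: Theorem~\ref{Thm2}, which identifies the collocation solution $\tilde u_n$ as the limit $\varepsilon\to 0$ of the mollified Galerkin solutions of \eqref{eq:Gal_2} and thereby puts the scheme inside the variational setting where the abstract estimate of \cite{strang1973analysis} applies; Theorem~\ref{cor1}, together with the classical simultaneous-approximation results for sigmoidal networks \cite{attali1997approximations,hornik1990universal,pinkus1999approximation}, which supplies the approximation (consistency) ingredient in $C^2$; and the standing hypothesis that the matrix $\mathbb{S}$ of \eqref{eq:stiff} is full rank and \emph{uniformly} invertible, which supplies the stability ingredient. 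Throughout, recall that in this analysis the Robin conditions are assumed to hold on every $\mathbb{E}^{(n)}$, so only the interior operator $Lv:=-\mu v''+\gamma v'+\lambda v$ enters, and that $u\in C^2(I)$ since the coefficients are constant and $f$ is smooth.

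Concretely, fix $n$ and let $\tilde u_n\in\mathbb{E}^{(n)}$ solve $L\tilde u_n(x^{(n)}_j)=\tilde f(x^{(n)}_j)$ at the collocation nodes, where $\tilde f\in\mathbb{E}^{(n)}$ is the ELM interpolant of $f$ at those nodes, so $\tilde f(x^{(n)}_j)=f(x^{(n)}_j)=Lu(x^{(n)}_j)$. Pick a comparison element $w_n\in\mathbb{E}^{(n)}$ with $\|u-w_n\|_{C^2(I)}$ arbitrarily small; such a $w_n$ exists by Theorem~\ref{cor1} extended to the first two derivatives (and could even be taken to interpolate $u$ at the nodes by Theorem~\ref{Thm1}). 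Then
\[
\|u-\tilde u_n\|_{L^2}\ \le\ \|u-w_n\|_{L^2}\ +\ \|w_n-\tilde u_n\|_{L^2},
\]
and the first term is $\le\|u-w_n\|_{C^2(I)}\to 0$. For the second term, let $\bm{c}\in\R^n$ be the coefficient vector of $w_n-\tilde u_n\in\mathbb{E}^{(n)}$ in the basis $\{\sigma_i\}$; from the definition of $\mathbb{S}$ in \eqref{eq:stiff},
\[
(\mathbb{S}\bm{c})_j=L(w_n-\tilde u_n)(x^{(n)}_j)=Lw_n(x^{(n)}_j)-\tilde f(x^{(n)}_j)=L(w_n-u)(x^{(n)}_j),
\]
so by the uniform-invertibility hypothesis $\|\bm{c}\|\le\|\mathbb{S}^{-1}\|\,\big\|\big(L(w_n-u)(x^{(n)}_j)\big)_j\big\|\le C\,\|w_n-u\|_{C^2(I)}$ with $C$ independent of $n$; passing from the coefficients back to the function (using $0\le\sigma_i\le 1$, so $\|\sum_i c_i\sigma_i\|_{L^2}\le\sum_i|c_i|$) then gives $\|w_n-\tilde u_n\|_{L^2}\le C'\|w_n-u\|_{C^2(I)}\to 0$. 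Combining the two terms yields $\|u-\tilde u_n\|_{L^2}\to 0$.

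The step I expect to be the main obstacle is the book-keeping compressed into the stability estimate above: the residual vector $\big(L(w_n-u)(x^{(n)}_j)\big)_j$ has $\sim n$ entries and the change of norm from coefficients in $\R^n$ to functions in $L^2(I)$ through the (possibly nearly redundant, ill-conditioned) system $\{\sigma_i\}$ costs a further $n$-dependent factor, so to keep these from accumulating one needs the stability bound in scale-invariant form, i.e. that the (pseudo-)inverse of $\mathbb{S}$ is bounded in the discretely scaled norms \emph{uniformly in} $n$. This is precisely what the hypothesis postulates, and it is the statement one would most want, and find hardest, to verify for a concrete random-ELM family; by contrast the $C^2$-density of $\mathbb{E}^{(n)}$, the regularity of $u$, and the passage $\varepsilon\to0$ through Theorem~\ref{Thm2} are comparatively routine. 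Equivalently, one can phrase the whole argument on the Galerkin side: apply Strang's second lemma (\cite{strang1973analysis}, Theorem~2.2) to \eqref{eq:Gal_2} to obtain a quasi-optimal bound on $\|u-\tilde u_{n,\varepsilon}\|_{L^2}$ uniform in $\varepsilon$ — consistency again being $C^2$-approximability of $u$ by $\mathbb{E}^{(n)}$ together with $\tilde f\to f$, and stability again the uniform invertibility of the discrete operator — and then let $\varepsilon\to 0$ via Theorem~\ref{Thm2}. In both routes the uniform stability of $\mathbb{S}$ is the hypothesis doing the real work.
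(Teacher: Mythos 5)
Your argument is essentially the paper's own proof: taking $w_n$ to be the ELM interpolant $P_n u$, your bound on $\|w_n-\tilde u_n\|$ via $\mathbb{S}^{-1}$ applied to the residual vector $\bigl(L(w_n-u)(x^{(n)}_j)\bigr)_j$ is exactly the paper's identity $u-\tilde u_n=(Q_nLP_n)^{-1}Q_nL(P_nu-u)$, with the same split into stability (uniform invertibility of $\mathbb{S}$) and consistency (approximability of $u$ by $\mathbb{E}^{(n)}$ via Theorems \ref{Thm1}--\ref{cor1}). If anything, you are more explicit than the paper about the two points it glosses over: that the consistency term must be measured in a norm controlling $Lu$ (i.e.\ $C^2$, since $\|L\|$ is not bounded on $L^2$), and that the passage between coefficient norms and function norms must be absorbed into the uniform-stability hypothesis.
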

\begin{proof}
Denote by $L$ the differential operator in equation \eqref{eq:ProDiff}, such that $ Lu=f$. Call $Q_{n}:\phi\in\mathbb{R}^n$ the operator that gives evaluation of function collocated on points $\{x_i\}$, i.e. $Q_{n}(\phi)= (\phi(x_i))_i$.  Then call $P_{n}:\phi\in\mathbb{R}^n$ the operator that gives the ELM network that interpolates functions at collocation nodes, i.e. $P_{n}(\phi)=\tilde{\phi}:=\sum w_j\sigma_j(x)$ such that $\tilde{\phi}(x_i)= \phi(x_i)$. Then the solution $\tilde{u}_{n}$ to the collocation problem can be seen as the solution to  $ Q_{n} L P_{n} \tilde{u}_{n}= Q_{n} f$, and matrix $\mathbb{S}$ of equation \eqref{eq:stiff} is the discrete representation of the operator $ Q_{n} L P_{n}$. With this we have:
\begin{gather*}
        Lu=f \Rightarrow 
        Q_{n}L u= Q_{n} f \Rightarrow \\
        Q_{n}L u - Q_{n}L P_{n} u + Q_{n}L P_{n} u = Q_{n} f\Rightarrow\\
        Q_{n}L P_{n} u + Q_{n}L (u-P_{n} u)=Q_{n} f\Rightarrow\\
        u - (Q_{n}L P_{n})^{-1} Q_{n} f = (Q_{n}L P_{n})^{-1} Q_{n}L (P_{n} u -u)
\end{gather*}
At last notice that from $ Q_{n} L P_{n} \tilde{u}_{n}= Q_{n} f$ we can write $\tilde{u}_{n}=  (Q_{n}L P_{n})^{-1} Q_{n} f$. Finally:
\begin{equation*}
\|u - \tilde{u}_{n}\| \le \|(Q_{n}L P_{n})^{-1} \| \, \|Q_{n}\|\, \|L\|\, \|u-P_{n} u\| \ .
\end{equation*}
In this, we have that the first term is bounded from the hypothesis on the operator $\mathbb{S}^{-1}$; the operator $Q_{n}$ is trivially bounded in our setting; the linearity of operator $L$ imply that also the third term is bounded. The thesis then follows from the convergence properties stated in theorems \ref{Thm1}-\ref{cor1}.
\end{proof}
We remark that the collocation equations can be derived also via quadrature rules applied to equations \eqref{eq:Gal_2}. In this case there is no need of Theorem \ref{Thm2}, and the relation with collocation is obtained by the requirement that the quadrature rule is supported on the collocation points. In this case, the functions $\delta$ can be substituted with any family of locally supported functions whose support includes only one collocation point.

\section{Numerical results}\label{Sect:Test}
In this section, we apply the proposed method for the solution of benchmark problems exhibiting steep gradients as suggested in \cite{MITCHELL2013350}. More particularly, in Section \ref{sec:regular}, we first consider regular problems, in Section \ref{sec:boundary} boundary layer problems, and in Section \ref{sec:internal} internal layer problems.
For our computations, we have used Matlab2020b. In order to get reproducible results, we have used rng(5) to create the random values for the vectors $\bm{\alpha}$, $\bm{\beta}$ before calling the random value generator of Matlab (rand). The solution of the least squares problems is achieved via the backslash command.\par 
For each problem, the number $n$ of neurons ranged from 10 to 1280, doubling the number of neurons at each execution. For each fixed $n$, we compute solutions with various numbers $M$ of collocation points.
The computational time for all the above problems was of the order of 0.5 sec on a single intel core i7-10750H with 16GB RAM 2.60Ghz. In particular the maximum computational time was ~0.6 sec for $n=1280$.
To evaluate the approximation error, we used two different metrics: the absolute difference between computed and exact solution, denoted by $E_u$, and the residual of the equation, denoted by $R_f$:
\begin{equation*}
\begin{split}
E_u(x)=&|u(x)-\tilde{u}(x)|\\
R_f(x)=&|f(x)+\mu\tilde{u}^{\prime \prime}(x)-\gamma\tilde{u}^\prime(x)-\lambda\tilde{u}(x)| \ .
\end{split}
\label{eq:err}
\end{equation*}
Note, that all errors are absolute, not normalized.
Then, we computed the $L^2$ norm of $E_u$ and $R_f$; when computing the $L^2$-norm, we used 5000 equispaced points and the trapezoidal rule for integration. As reference numerical method, we considered the finite difference (FD) scheme obtained with a maximal order on 7 nodes computed on $M$ equispaced points. The FD function considered here is the piecewise linear polynomial one interpolating the computed values. We emphasize that in all computations the number of points considered for the FD solution is equal to $M$, i.e. the number of collocation equations considered for the ELM implementation. In two cases, we also considered the solution only at collocation points, thus considering $L^\infty$ errors between vectors in $\mathbb{R}^M$.

\subsection{Regular problems}\label{sec:regular}
In this section, we analyze sinusoidal-bump problems as well as a high-order polynomial problem to show that the proposed approach can result to a high approximation accuracy.\par
The first example is a simple 1D boundary value problem with homogeneous Dirichlet boundary conditions:
\begin{equation}
\left\{ \begin{array}{l}
u''+(4k^2\pi^2-1)u=4k\pi e^x\cos(2k\pi x), \qquad 0<x<1\\
u(0)=0\,,\ u(1)=0 \qquad \text{ with } k \in \mathbb{N}
\label{eq:sin2kpi}
\end{array} \right.
\end{equation}
The above has the exact analytical solution: 
\begin{equation*}
u=\text{exp}(x)\sin(2 k \pi x) 
\end{equation*}
The coefficient $k$ represents the number of oscillations in the domain. We consider the simple case with $k=1$ and a more challenging one with $k=5$ (see Figure \ref{Test:reg1}. The computed errors with respect to the exact solution are reported in Figure \ref{Test:reg1} and in Table \ref{Tab:1}. In both cases, we note that the proposed ELM network results to a good approximation with a modest number of neurons. 


\begin{figure}[ht]
\centering
\includegraphics[width=0.45 \textwidth]{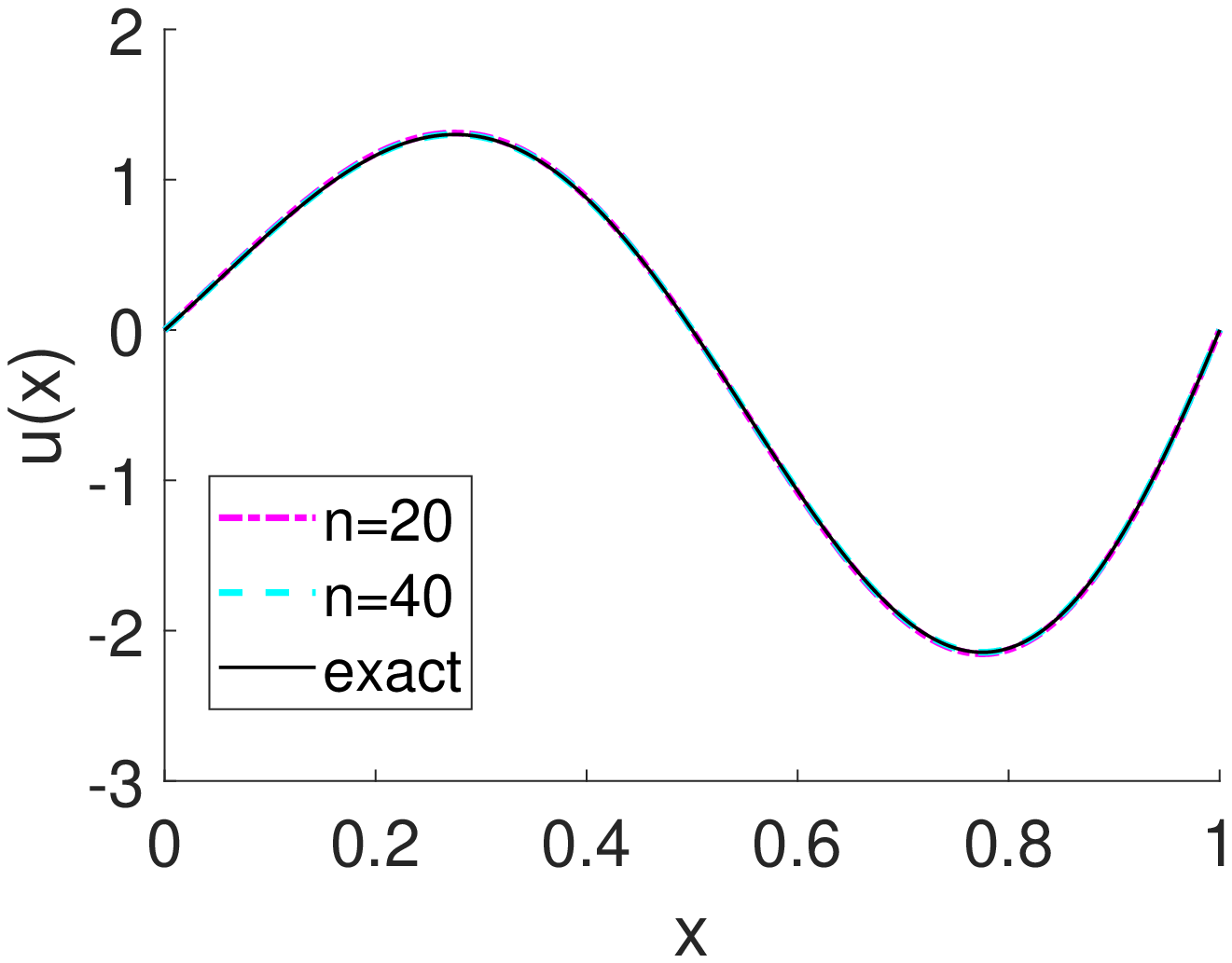}
\,
\includegraphics[width=0.45 \textwidth]{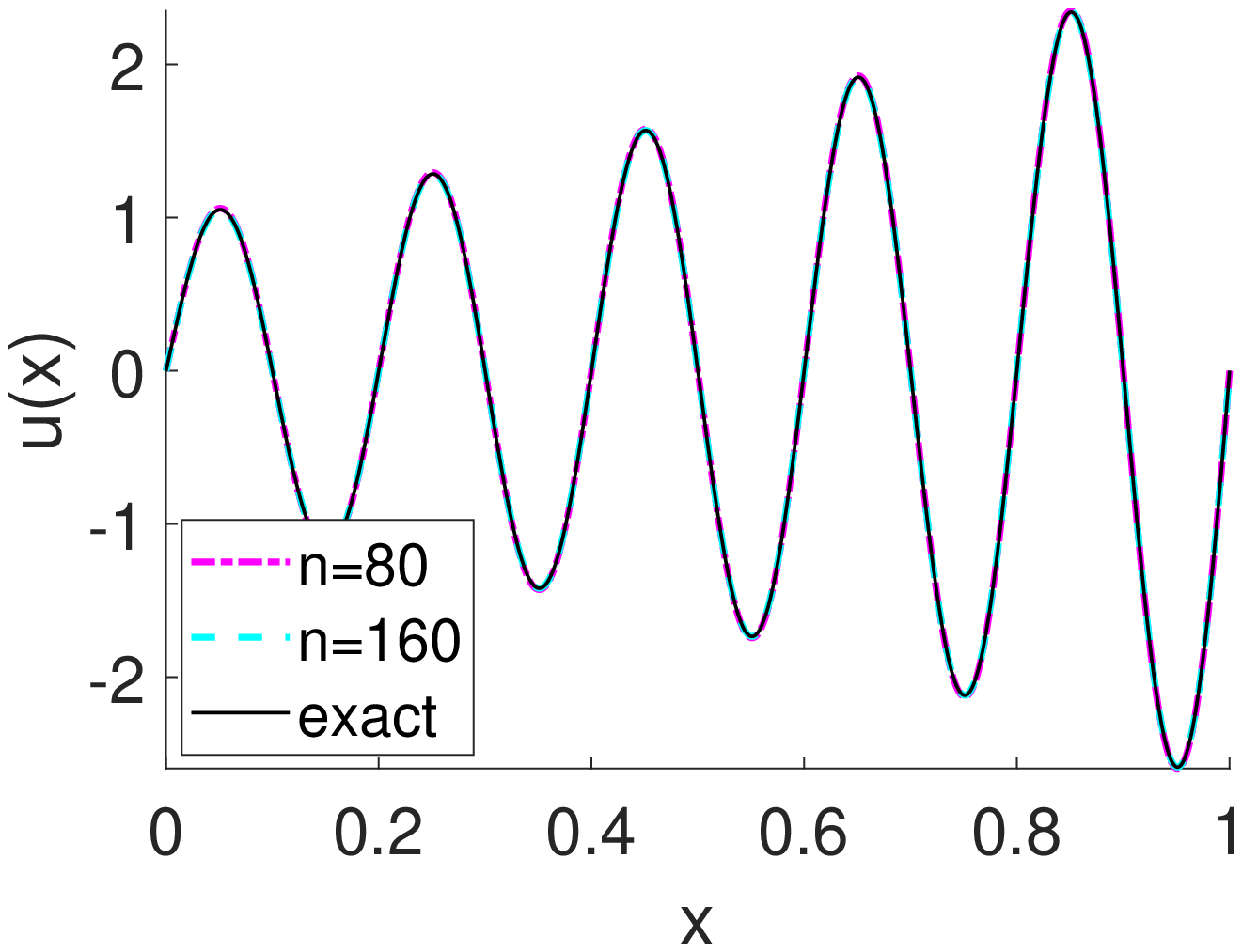}
\caption{Applying the proposed ELM network for the numerical solution of the problem given in \eqref{eq:sin2kpi}: approximation with $n$ neurons, the number of collocation points $M$ fixed to be $n/2$ (on the left panel, $k=1$, on the right panel, $k=5$.)}
\label{Test:reg1}
\end{figure}

\begin{figure}[th]
\centering
\includegraphics[width=0.45 \textwidth]{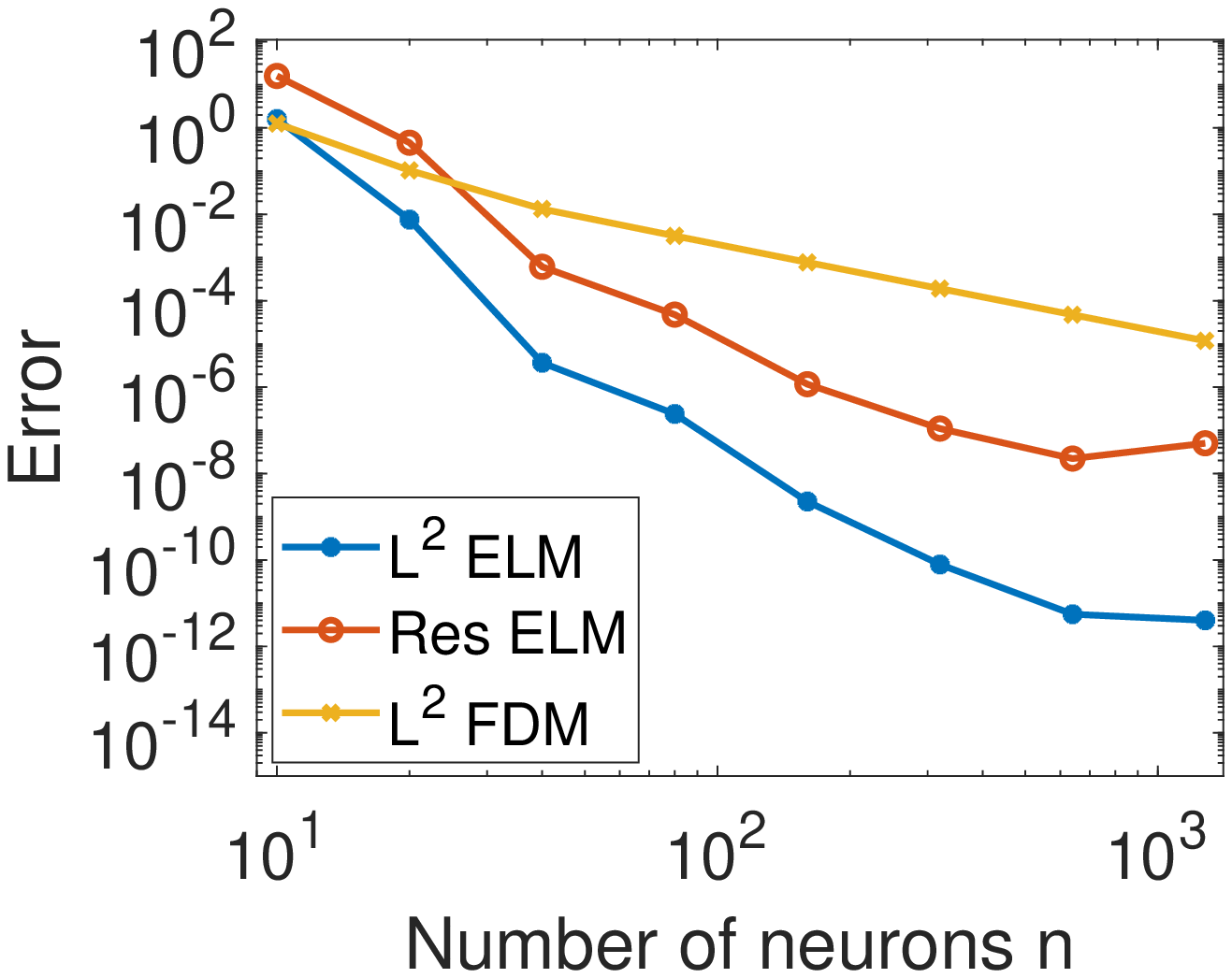}
\,
\includegraphics[width=0.45 \textwidth]{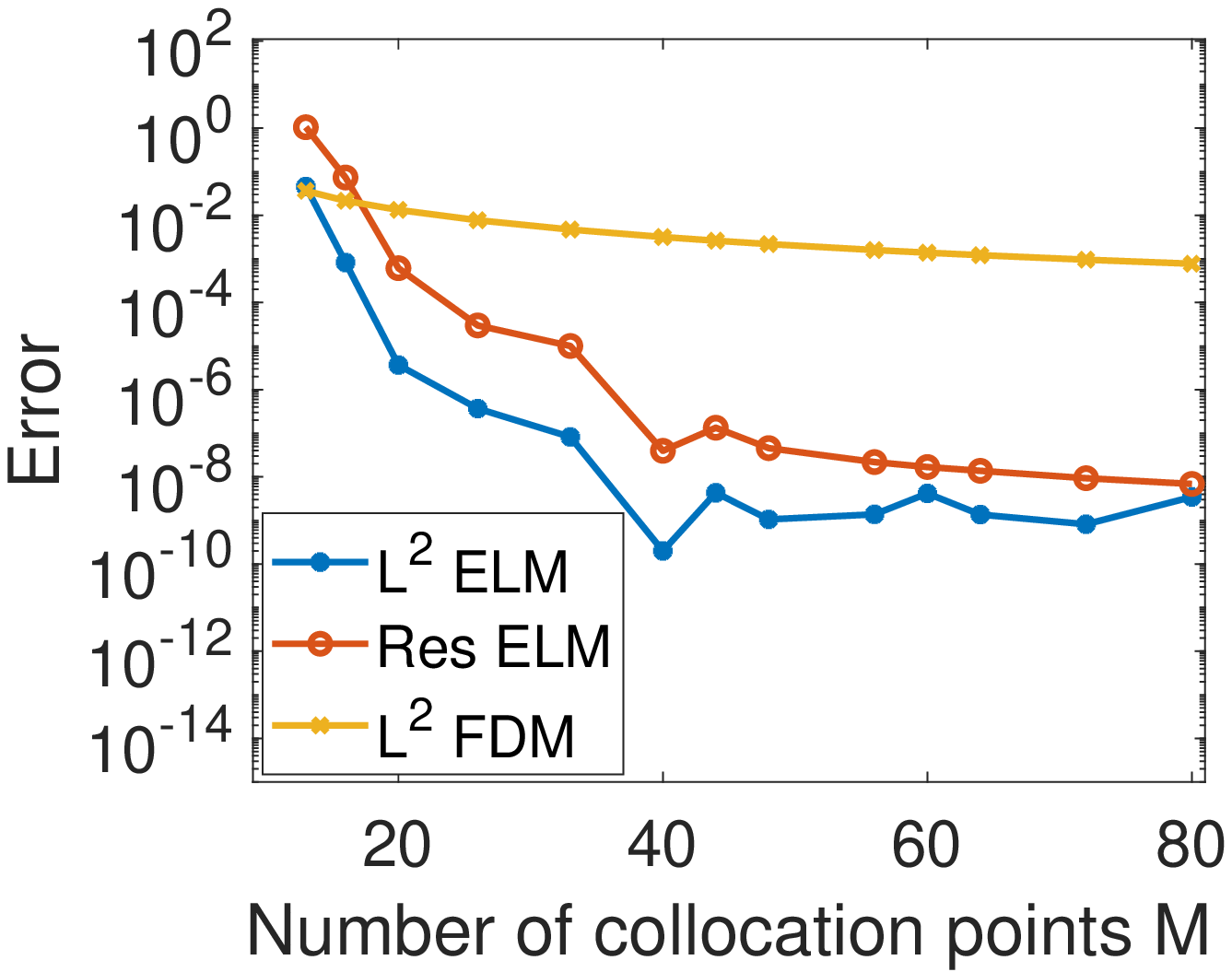}\\
\includegraphics[width=0.45 \textwidth]{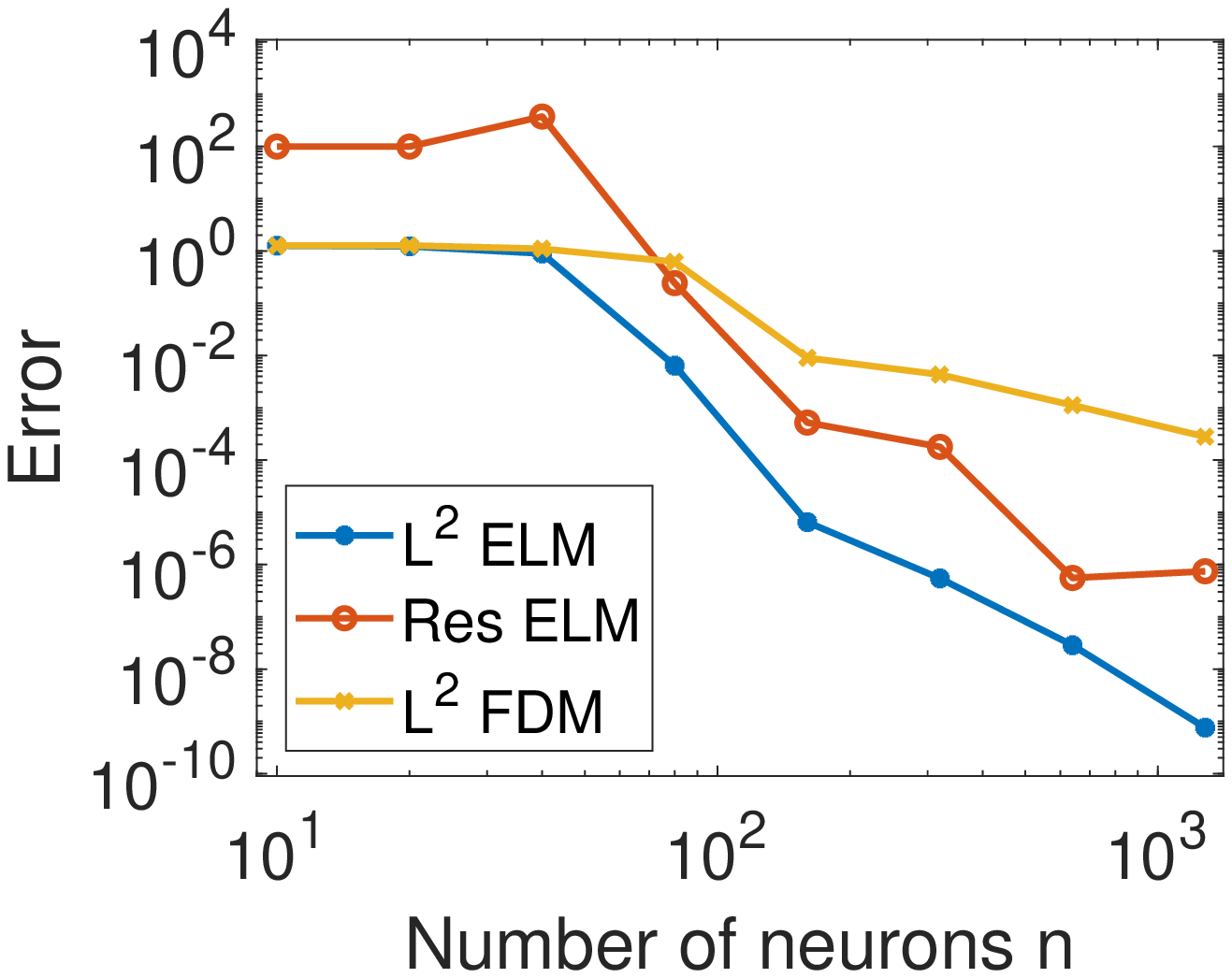}
\,
\includegraphics[width=0.45 \textwidth]{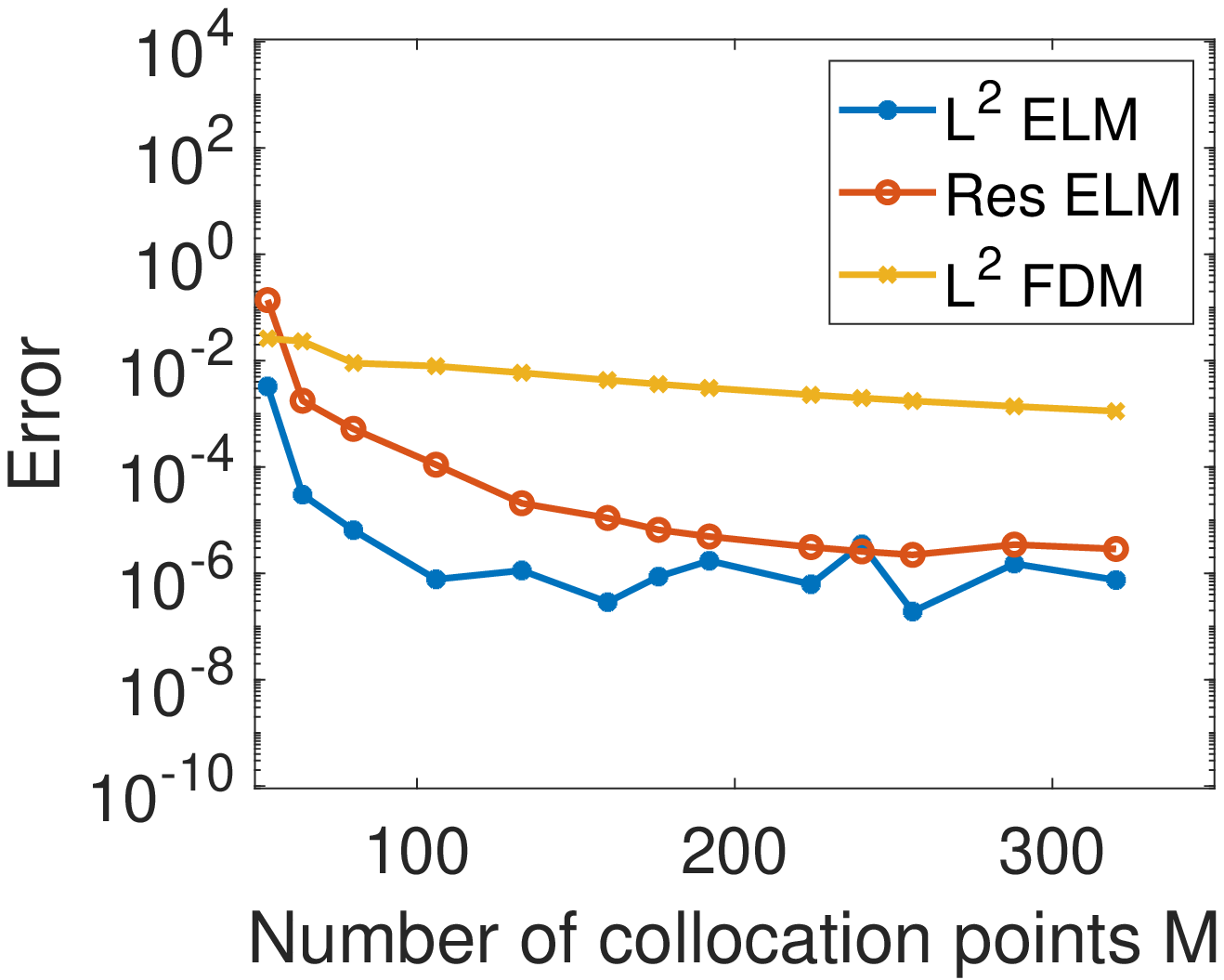}

\caption{Error and residual convergence for the ELM numerical solution of the problem \eqref{eq:sin2kpi}. On the top panels, $k=1$: on the left top panel $M=n/2$, on the right top panel $n=40$. On the bottom panels, $k=5$: on the left bottom panel, $M=n/2$, on the right bottom panel, $n=160$.}
\label{Test:reg1_errK1}
\end{figure}

\begin{figure}[ht]
\centering
\includegraphics[width=0.45 \textwidth]{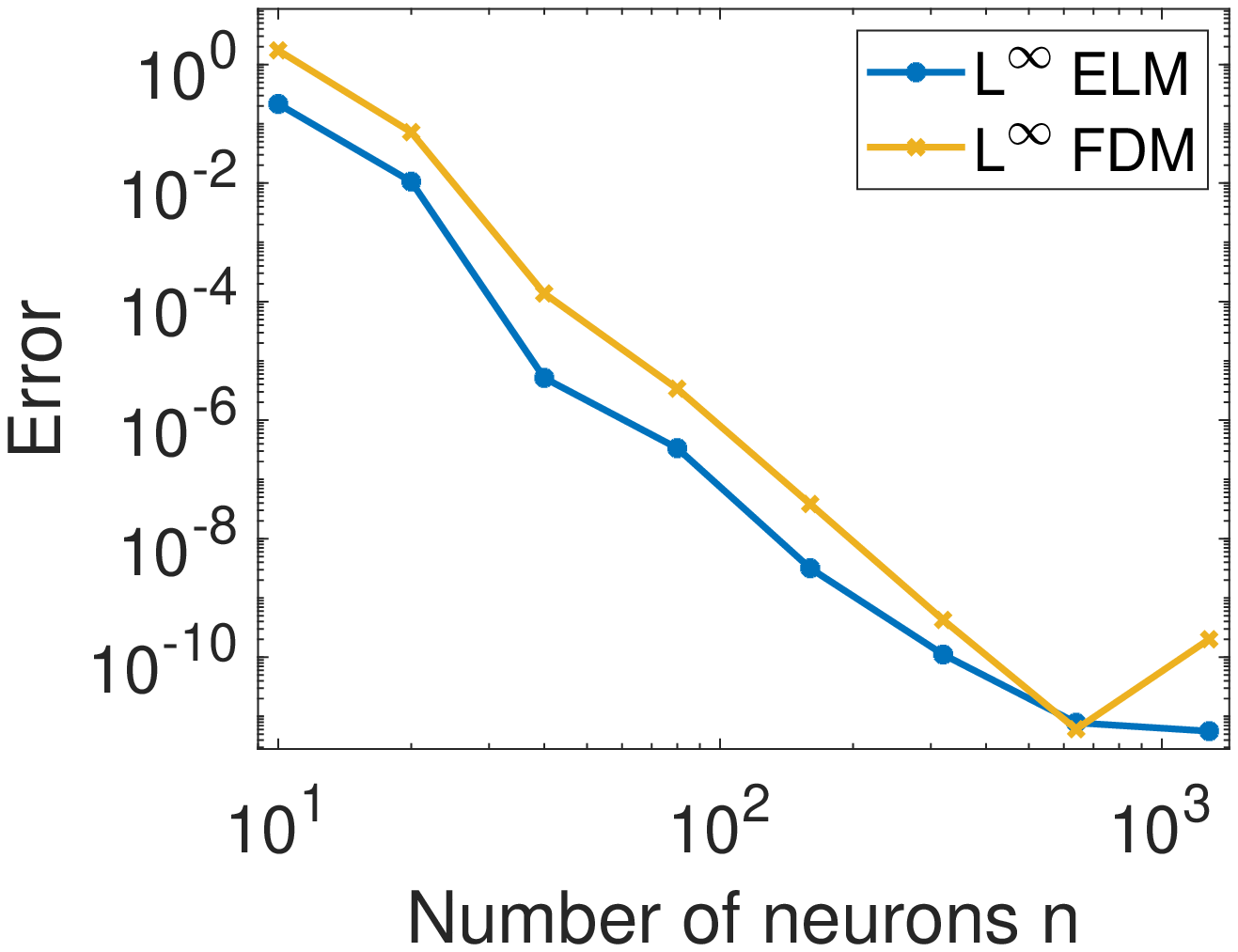}
\,
\includegraphics[width=0.45 \textwidth]{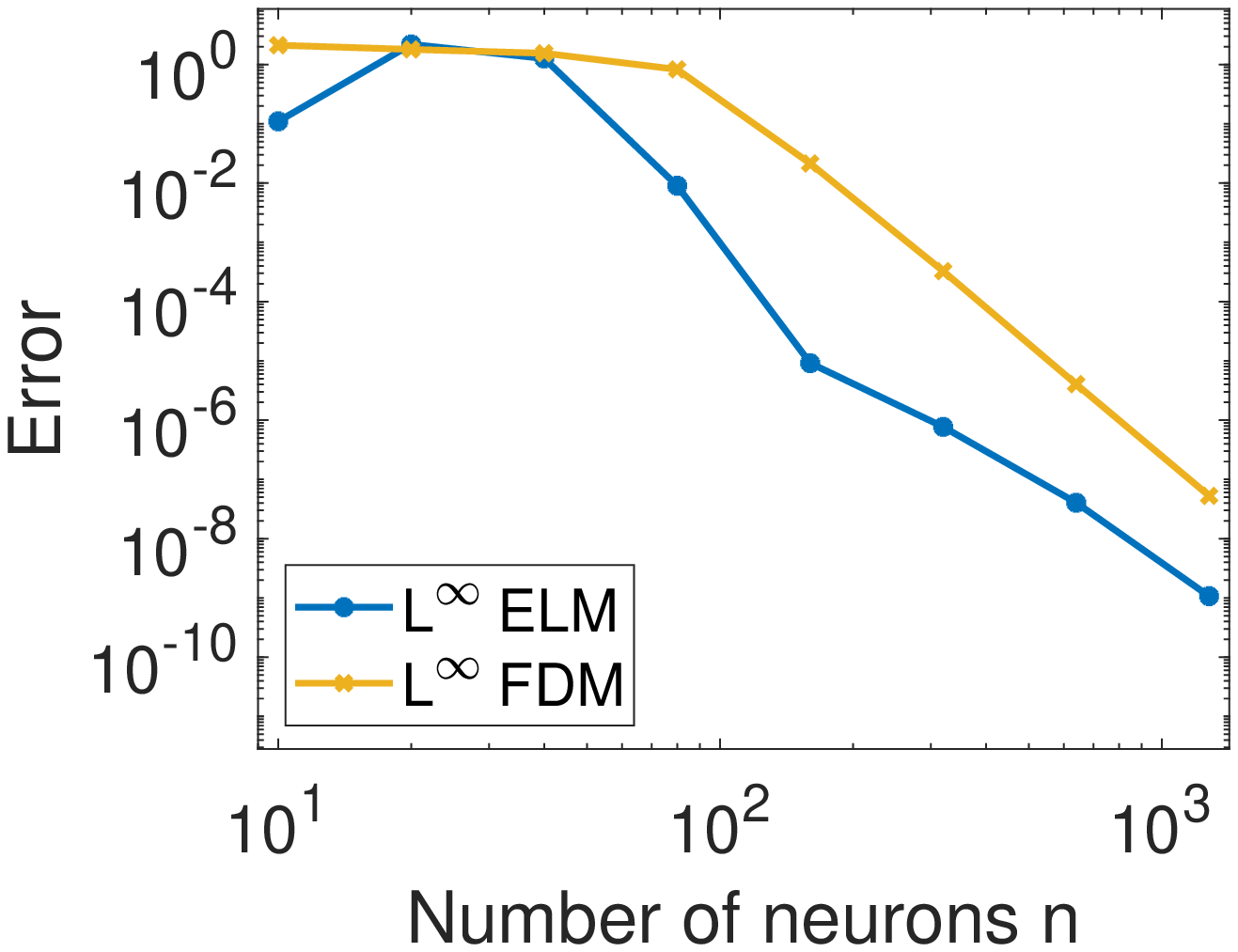}
\caption{Converge plots of the $L^\infty$ norm at the collocation points for the solution of the problem \eqref{eq:sin2kpi}: on the left panel, $k=1$, on the right panel, $k=5$.}
\label{Test:reg1_SuperConv}
\end{figure}

\begin{table}[th]
\tiny 
\begin{tabular}{|ll|ll|}
\hline
k=1 & & & \\
neurons & nodes & Error\_L2 & Residual \\ 
\hline \hline
10 & 3  & 1.2825e+00 & 1.7566e+01 \\
   & 4  & 1.1336e+00 & 1.4846e+01 \\
   & 5  & 1.6084e+00 & 1.5970e+01 \\
   & 6  & 6.4758e+00 & 4.7969e+01 \\
   & 8  & 1.9045e-01 & 6.7843e+00 \\
   & 10 & 1.1092e-02 & 1.5801e-01 \\ \hline
20 & 6  & 4.1912e-01 & 2.1314e+01 \\
   & 8  & 4.1948e-01 & 7.0691e+00 \\
   & 10 & 7.5525e-03 & 4.5484e-01 \\
   & 13 & 2.1932e-03 & 4.7934e-02 \\
   & 16 & 9.4571e-05 & 4.7279e-03 \\
   & 20 & 3.4250e-07 & 2.3290e-05 \\ \hline
40 & 13 & 4.5810e-02 & 1.0488e+00 \\
   & 16 & 8.2667e-04 & 7.3745e-02 \\
   & 20 & 3.6740e-06 & 6.1176e-04 \\
   & 26 & 3.6832e-07 & 2.9772e-05 \\
   & 33 & 8.2306e-08 & 1.0047e-05 \\
   & 40 & 1.9954e-10 & 3.9535e-08 \\ \hline
80 & 26 & 1.4968e-04 & 1.1856e-02 \\
   & 32 & 1.4893e-05 & 1.9670e-03 \\
   & 40 & 2.3953e-07 & 4.7799e-05 \\
   & 53 & 1.9385e-10 & 5.7391e-08 \\
   & 66 & 3.0444e-10 & 6.2147e-08 \\
   & 80 & 2.3971e-11 & 5.9213e-09 \\ \hline
\end{tabular}
\,
\begin{tabular}{|ll|ll|}
\hline 
k=5 & & & \\
neurons & nodes & Error\_L2 & Residual \\ 
\hline  \hline
40  & 13  & 1.0291e+00 & 5.4833e+02 \\
    & 16  & 9.0266e-01 & 2.8111e+02 \\
    & 20  & 8.9477e-01 & 3.7287e+02 \\
    & 26  & 1.0524e+00 & 2.7019e+01 \\
    & 33  & 1.1745e+00 & 1.1760e+00 \\
    & 40  & 3.0679e-01 & 8.3835e+00 \\ \hline
80  & 26  & 1.0283e+00 & 2.7422e+01 \\
    & 32  & 2.3444e-02 & 1.4636e+00 \\
    & 40  & 6.3788e-03 & 2.4238e-01 \\
    & 53  & 6.1162e-04 & 3.7449e-02 \\
    & 66  & 2.7654e-06 & 1.3766e-03 \\
    & 80  & 1.0742e-06 & 1.9191e-04 \\ \hline
160 & 53  & 3.2723e-03 & 1.3777e-01 \\
    & 64  & 3.0106e-05 & 1.7641e-03 \\
    & 80  & 6.4715e-06 & 5.2040e-04 \\
    & 106 & 7.7111e-07 & 1.1040e-04 \\
    & 133 & 1.1303e-06 & 2.0768e-05 \\
    & 160 & 2.8612e-07 & 1.0990e-05 \\ \hline
320 & 106 & 5.1808e-06 & 9.5524e-04 \\
    & 128 & 6.5518e-06 & 1.1001e-03 \\
    & 160 & 5.4403e-07 & 1.7879e-04 \\
    & 213 & 1.3397e-07 & 1.0850e-05 \\
    & 266 & 9.3901e-08 & 4.1553e-06 \\
    & 320 & 2.6074e-09 & 8.7342e-08 \\ \hline
\end{tabular} \caption{Error and residuals for the ELM numerical solution of problem \eqref{eq:sin2kpi}: on the left table, $k=1$, on the right table $k=5$.} \label{Tab:1}
\end{table}
\normalsize

The second boundary value problem that we consider here is a problem containing a high-order polynomial:
\begin{equation}
\left \{ \begin{array}{l}
u''=2^{2p}px(1-x)^{p-2}x^{p-2}(-1+2x-2x^2+p(1-4x+4x^2)), \ 0<x<1\\
u(0)=0\,,\ u(1)=0
\end{array} \right.
\label{eq:polinomiale}
\end{equation}
The above BV problem has the exact solution: 
\begin{equation}
u(x)=2^{2p}x^p(1-x)^p
\label{eq:sol_esatta_poly}
\end{equation}
Here, we consider the case $p=10$, so that to get a non-exact solution with FD.

\begin{figure}[th]
\centering
\includegraphics[width=0.45 \textwidth]{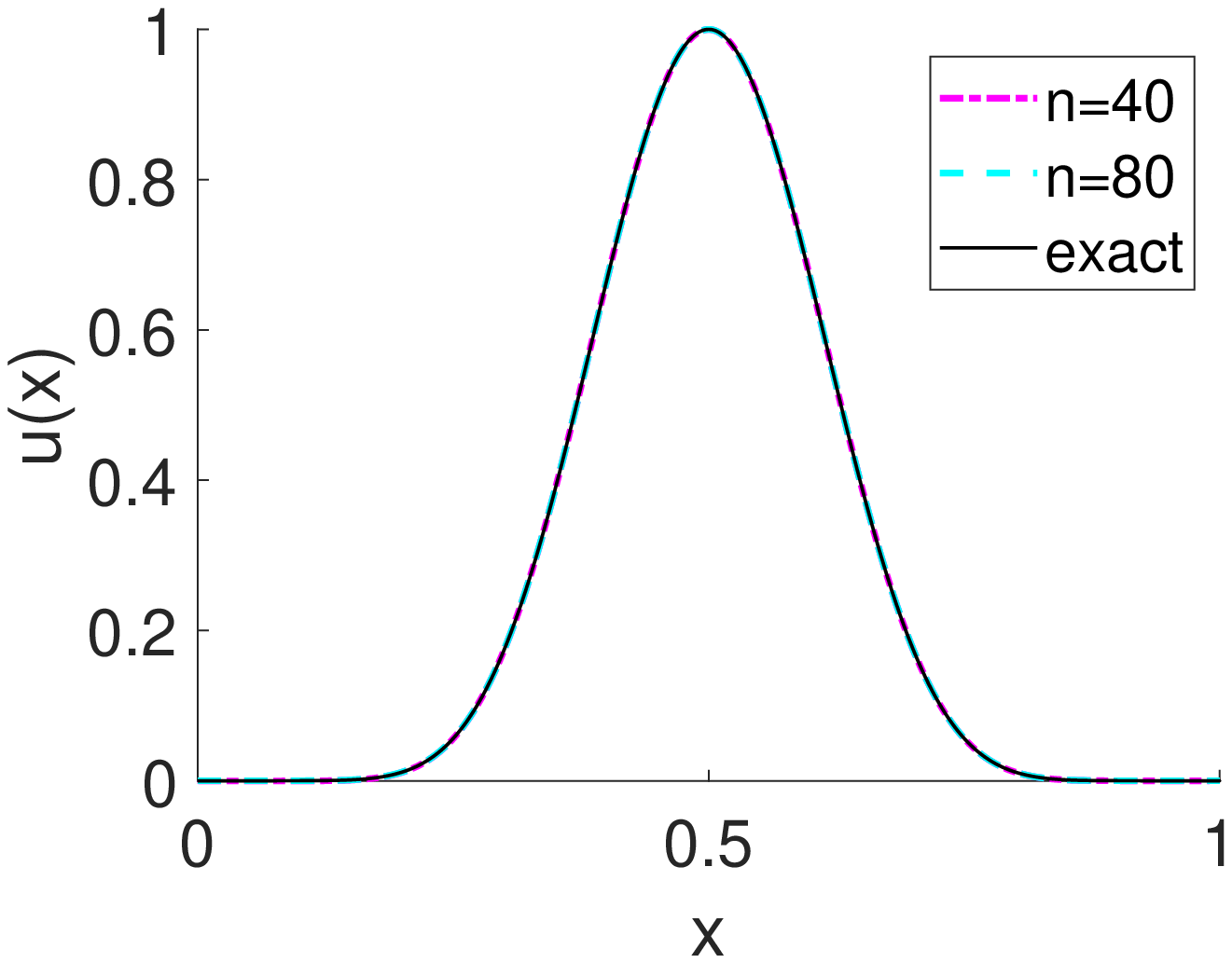}\\
\includegraphics[width=0.45 \textwidth]{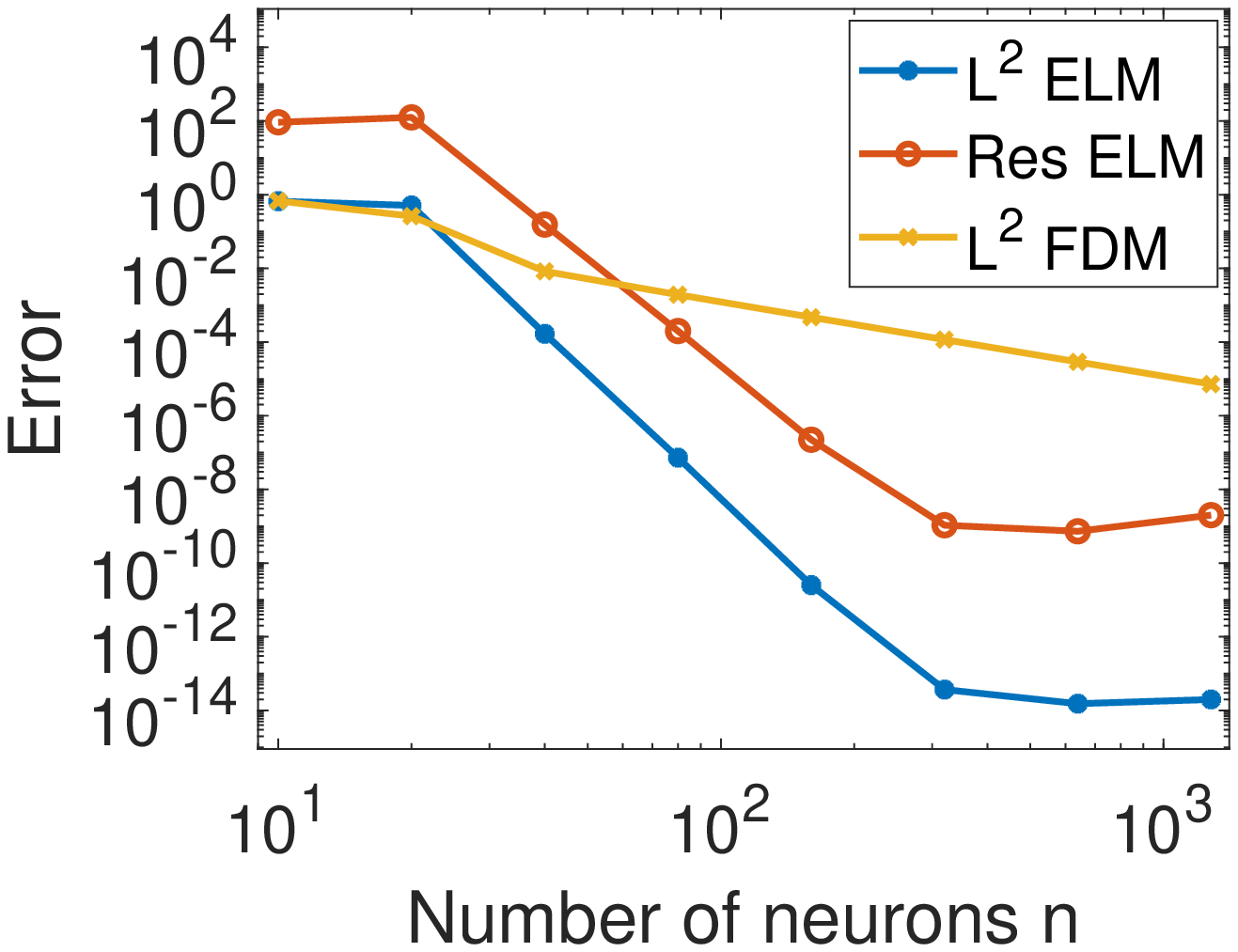}
\,
\includegraphics[width=0.45 \textwidth]{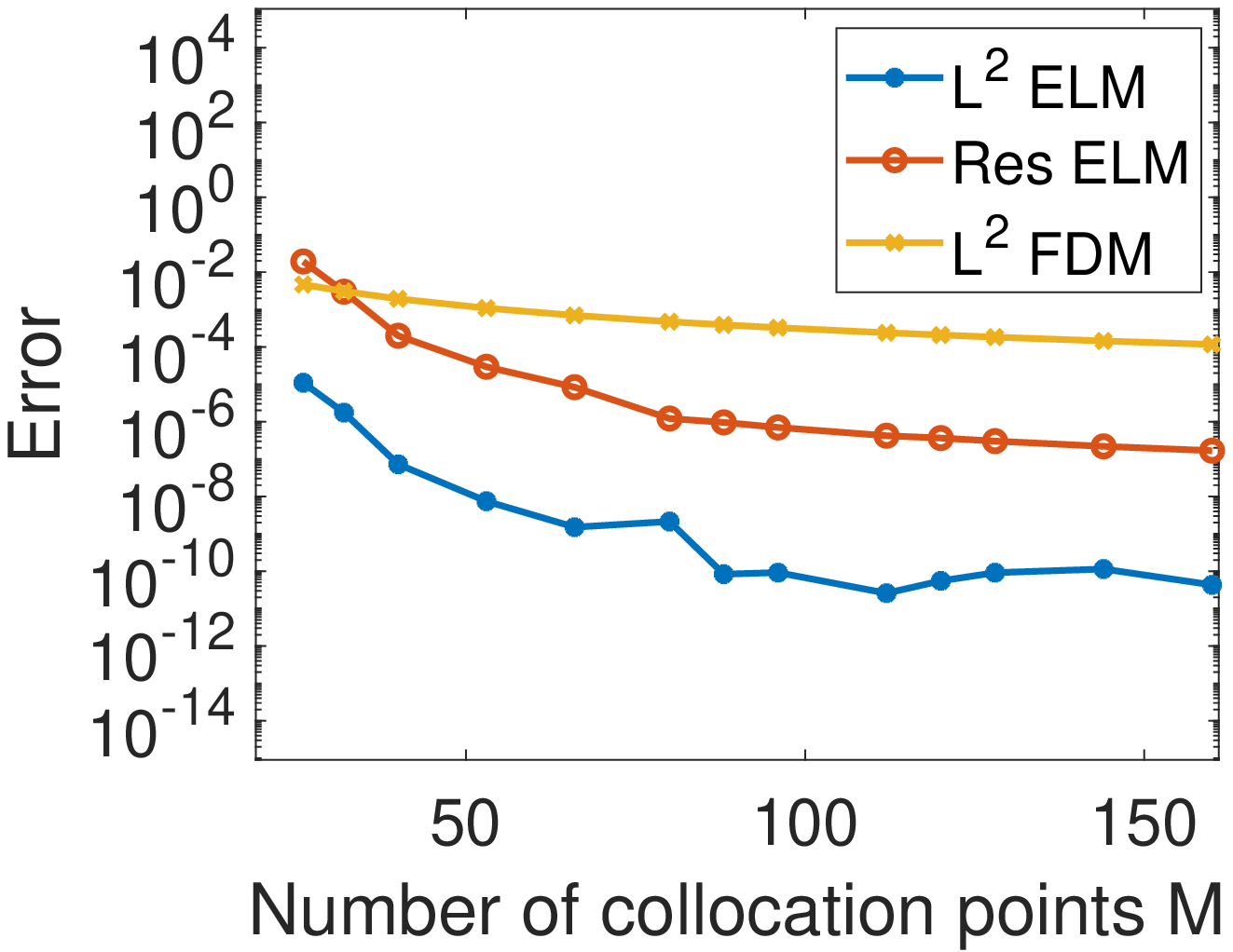}
\caption{Numerical tests for the solution of problem \eqref{eq:polinomiale} with $p=10$. On the top panel are depicted the exact \eqref{eq:sol_esatta_poly} and the ELM numerical solutions with $n=40,80$; the number of collocation points $M$ was set to $n/2$. On the bottom panels are shown the error and residual convergence: on the left panel, we fix $M=n/2$ and vary $n$ and on the right panel we fix $n=80$ and vary $M$}.
\label{Test:reg3_err}
\end{figure}

The computed errors arising from the proposed ELM network with respect to the exact analytical solution are reported in Figure \ref{Test:reg3_err}.

\subsection{Boundary Layer problems} \label{sec:boundary}
In this section, we consider benchmark boundary layer problems, namely an advection-dominated and a reaction-dominated problem that lead to sharp gradients near the boundary (see also \cite{quarteroniLibro}). We show how the proposed scheme can deal properly with the steep gradients that arise in these cases.\par
When $\lambda=0$ and $f\equiv0$ in \eqref{eq:ProDiff}, we have a diffusion-advection problem. If we impose Dirichlet boundary conditions with $g_0=0\,,\ g_1=1$, the solution reads:
\begin{equation}
u(x)=\dfrac{\text{exp}\left(\dfrac{\gamma}{\mu}x\right)-1}{\text{exp}\left(\dfrac{\gamma}{\mu}\right)-1} \ .
\label{eq:exact_1}
\end{equation}
If $|\frac{\gamma}{\mu}|<<1$, the solution approaches the line connecting the boundary conditions while
if $|\frac{\gamma}{\mu}|>>1$ 
the solution is near to zero in almost the whole domain, except in a neighborhood of the right boundary where the function has a steep gradient. In this last case, we say that the solution has a \textit{boundary layer} of width order $\mathcal{O}\left(\dfrac{\mu}{\gamma}\right)$. From a numerical point of view, for the particular problem, in which advection dominates diffusion, we need to catch the behavior on a small scale $\frac{\mu}{\gamma}$, but it is well known that using finite difference or finite element methods, the approximate solution can oscillate while the exact solution is monotone (see \cite{quarteroniLibro}).\par
In the advection-dominated problem,  the presence of the boundary layer is usually related to the so-called global P\'eclet number:
\begin{equation}
\mathbb{P}e_g=\frac{|\gamma|\cdot|I|}{2\mu}
\label{eq:Pe_1}
\end{equation}
where $|I|$ is the domain length.

\begin{figure}[th]
\centering
\includegraphics[width=0.45 \textwidth]{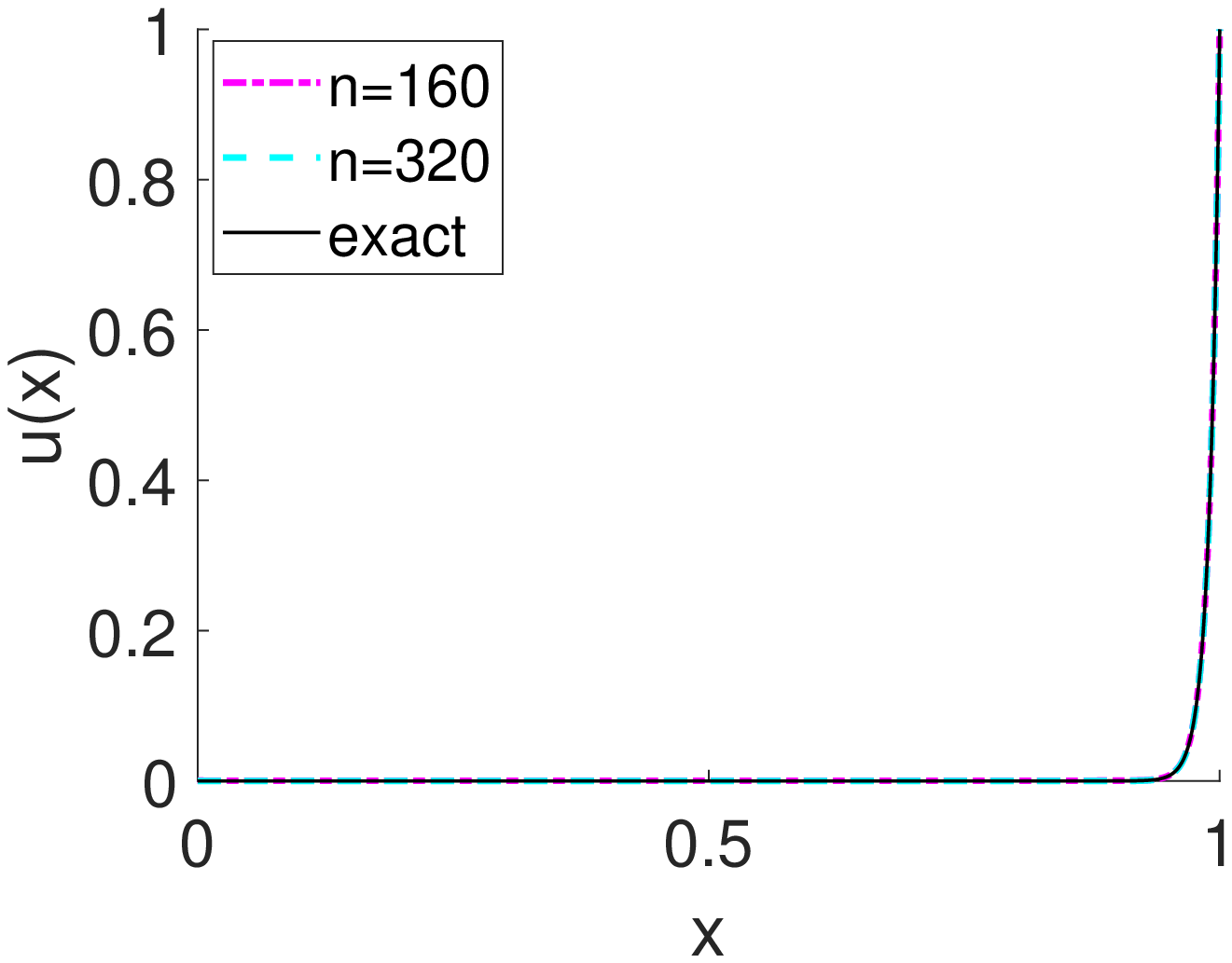}\\
\includegraphics[width=0.45 \textwidth]{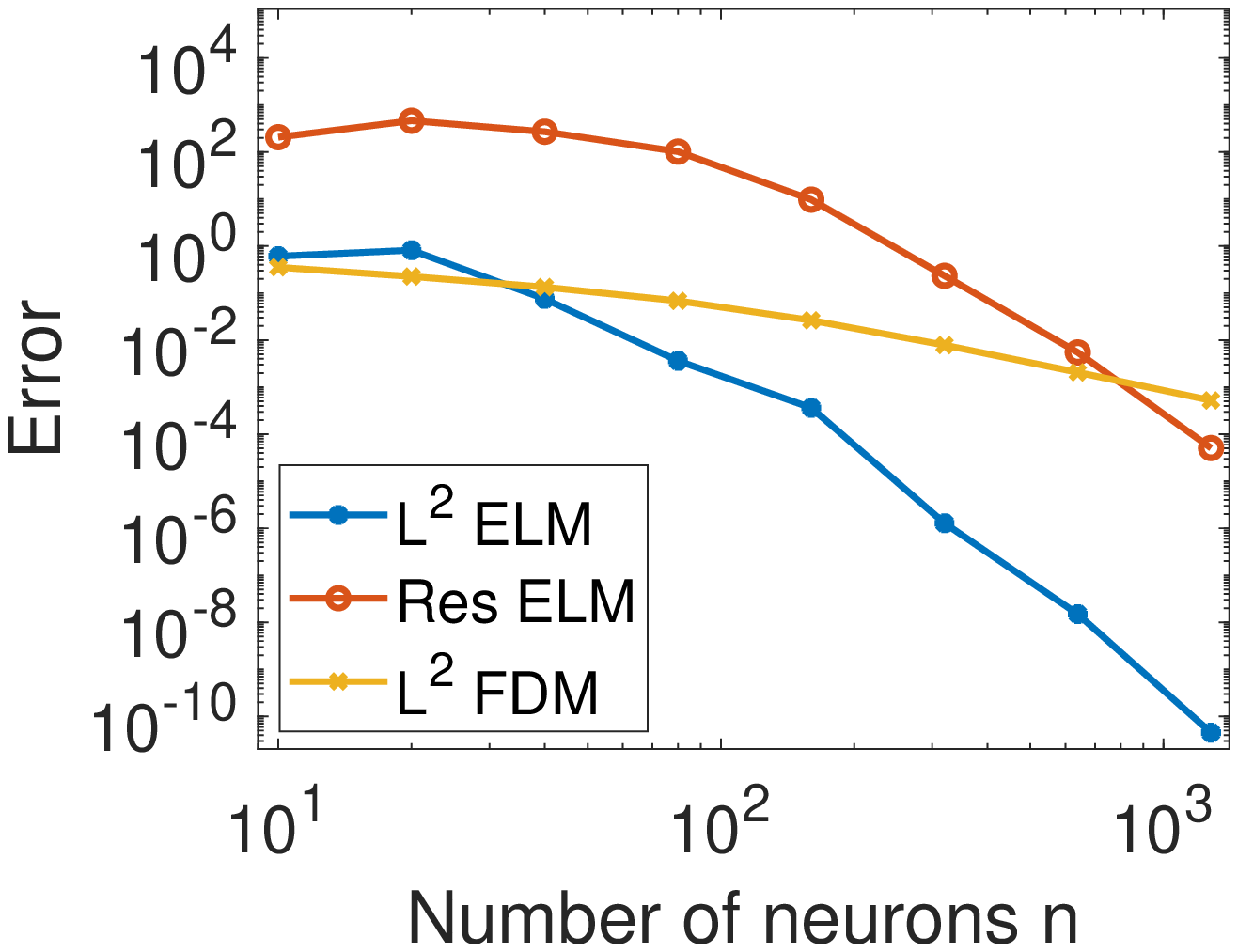}
\,
\includegraphics[width=0.45 \textwidth]{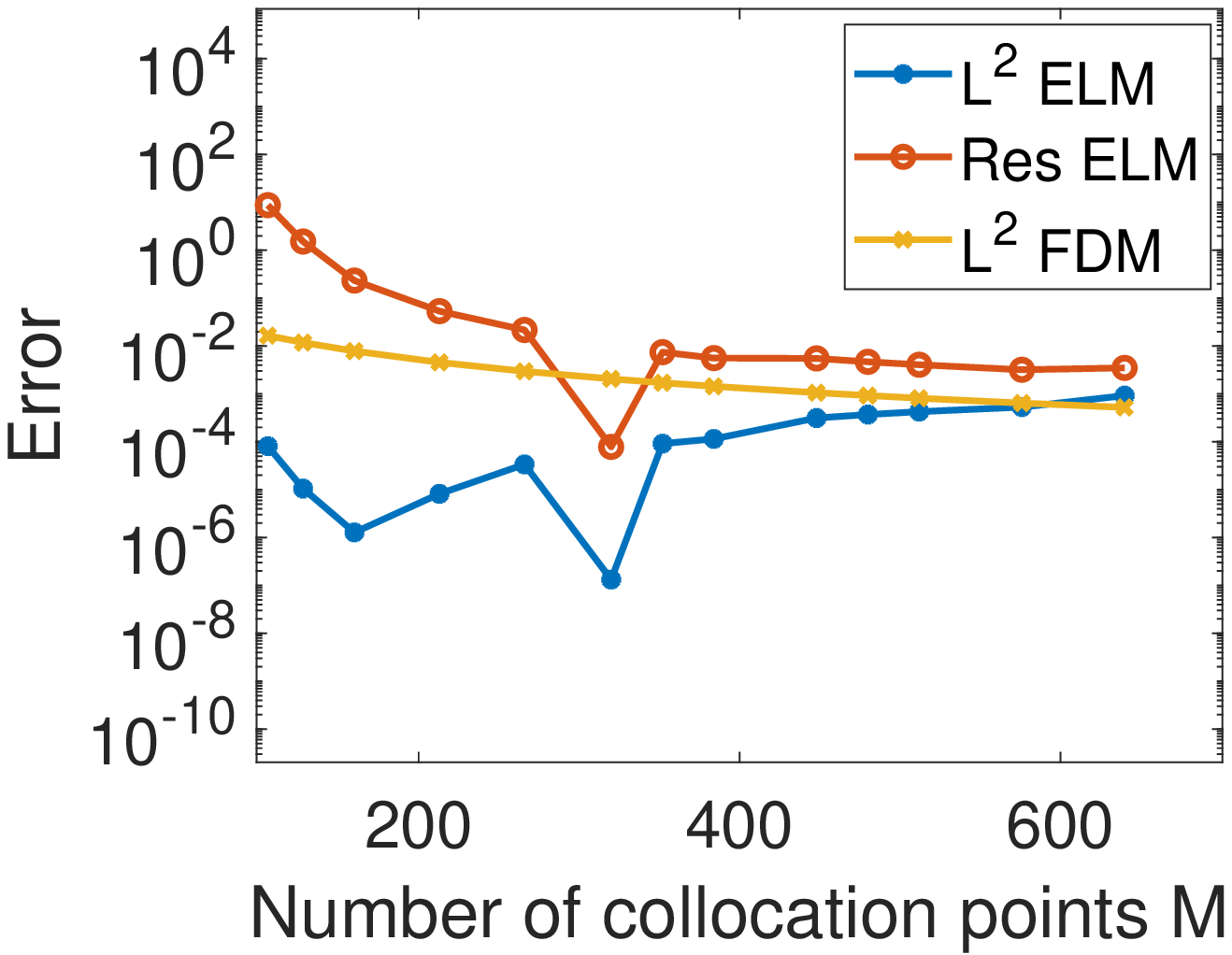}
\caption{Numerical tests for the solution of the problem \eqref{eq:ProDiff} with $\mu=1, \gamma=100, \lambda=0$. In this case, the P\'eclet number is $\mathbb{P}e_g=50$, the boundary layer is of the order of $10^{-2}$. On the top panel are shown both the exact \eqref{eq:exact_1} and the ELM numerical solutions with $n=40,80$; the number of collocation points is $M=n/2$. On the bottom panels are shown the error and residual convergence plots; on the left panel we fix $M=n/2$ and vary $n$ and on the right panel we fix $n=320$ and vary $M$.}
\label{Test:TraspDom_err}
\end{figure}

We considered the case with $\gamma=100$ and $\mu=1$, with a P\'eclet number $\mathbb{P}e_g=50$. Figure \ref{Test:reg3_err} depicts the exact analytical solution and some of the ELM numerical solutions along with the corresponding approximation errors. When $\gamma=0$, $\lambda>0$ and $f\equiv0$ in the problem \eqref{eq:ProDiff}, we have a diffusion-reaction  problem. As before, we impose Dirichlet boundary conditions with $g_0=0\,,\ g_1=1$. The problem has analogous difficulties as the previous one. The exact analytical solution is:
\begin{equation}
u(x)=\frac{\text{sinh}(\theta x)}{\text{sinh}(\theta)}, \qquad \text{ where } \theta=\sqrt{\lambda/\mu}.
\label{eq:exact_2}
\end{equation}
Also in this case the solution can give steep gradients if $\lambda/\mu>>1$. We call this behavior as \textit{boundary layer} at the right boundary of width order $\mathcal{O}(\sqrt{\mu/\lambda})$. In this case, the usual definition of global P\'eclet number is:
\begin{equation}
\mathbb{P}e_g=\frac{|\lambda|\cdot|I|^2}{6\mu} \ .
\label{eq:Pe_2}
\end{equation}
We have set $\lambda=300$ and $\mu=1$ so that the P\'eclet number is $\mathbb{P}e_g=50$ as before. The computed errors with respect to the exact solution are reported in Figure \ref{Test:ReazDom_err}. 

\begin{figure}[th]
\centering
\includegraphics[width=0.45 \textwidth]{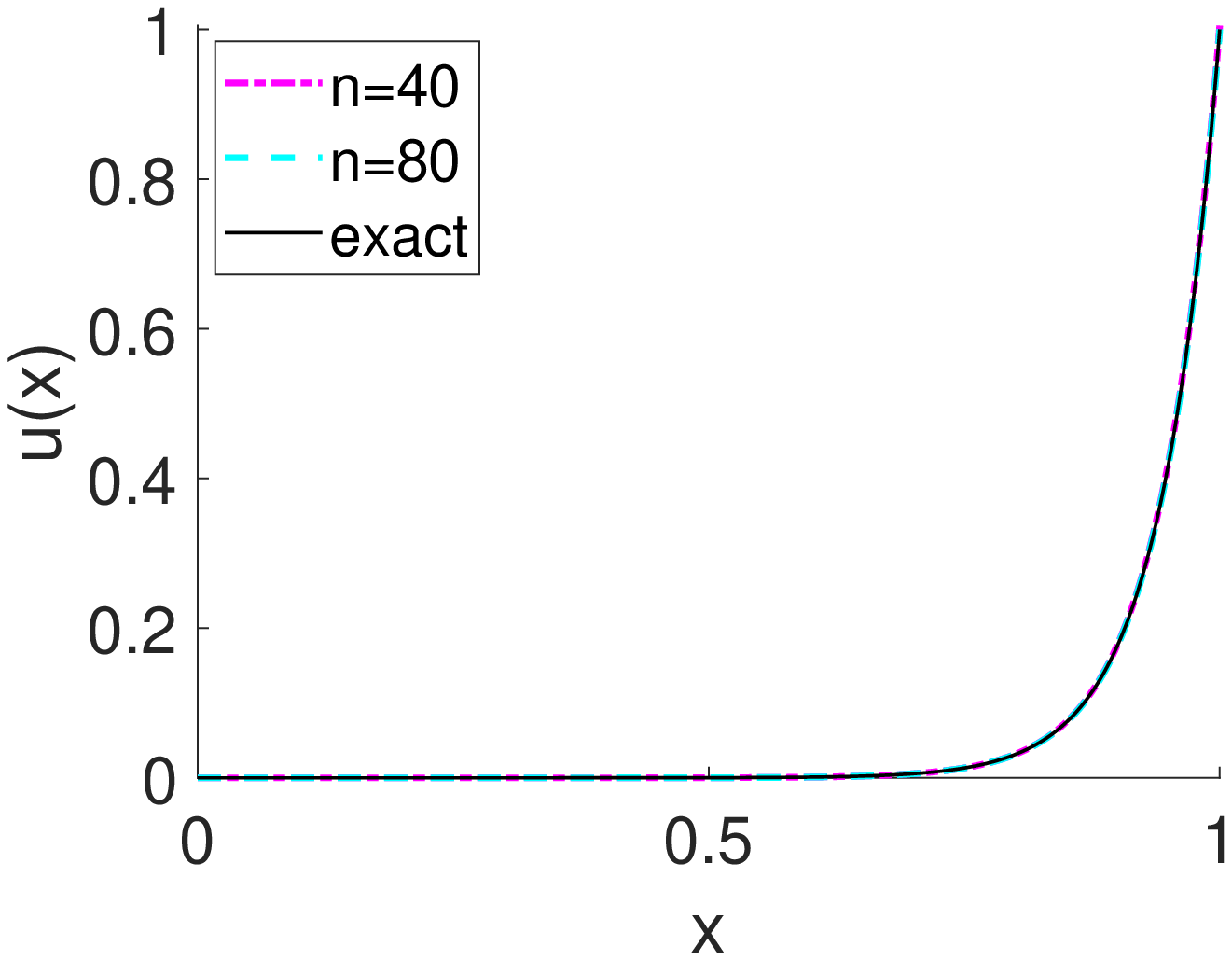}\\
\includegraphics[width=0.45 \textwidth]{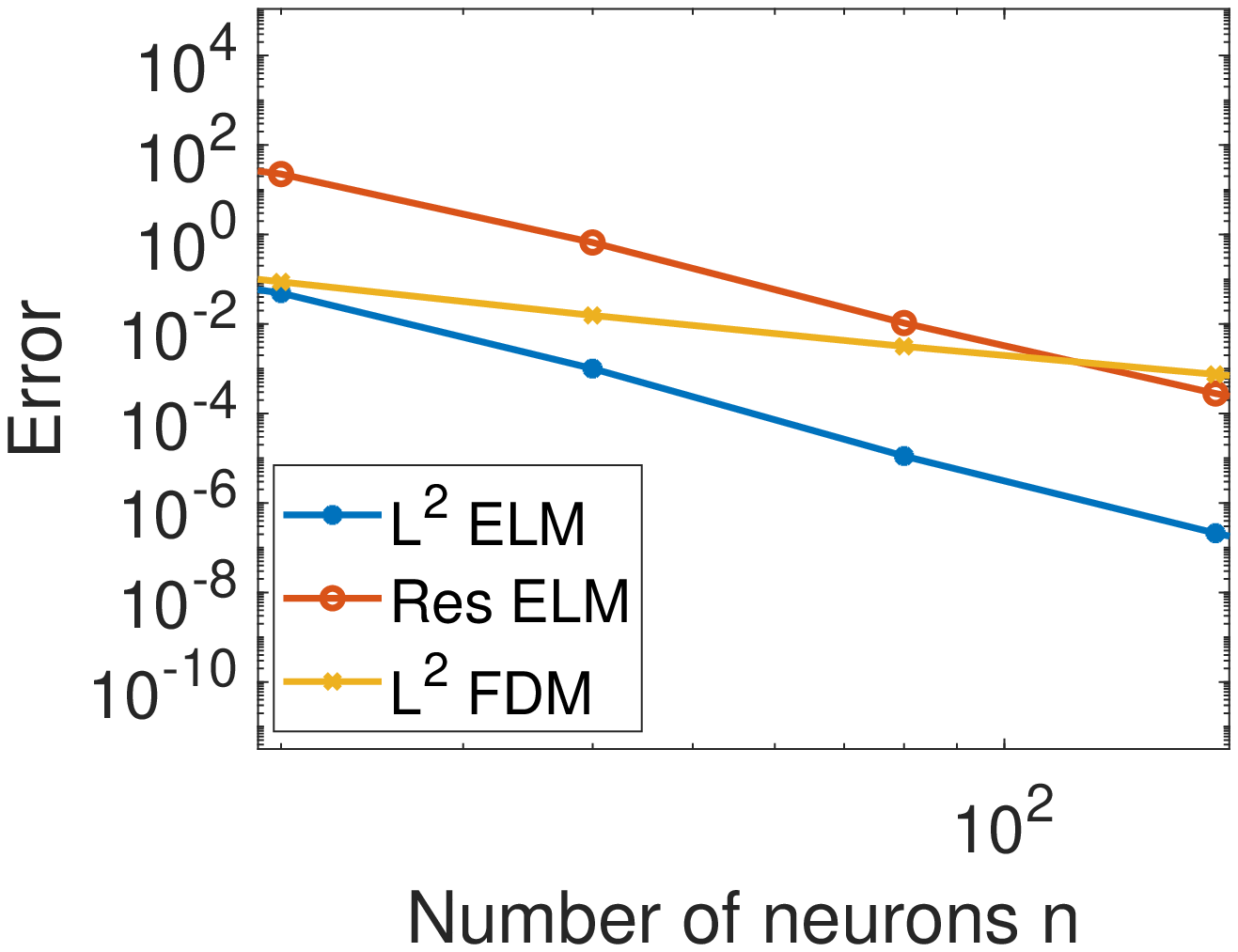}
\,
\includegraphics[width=0.45 \textwidth]{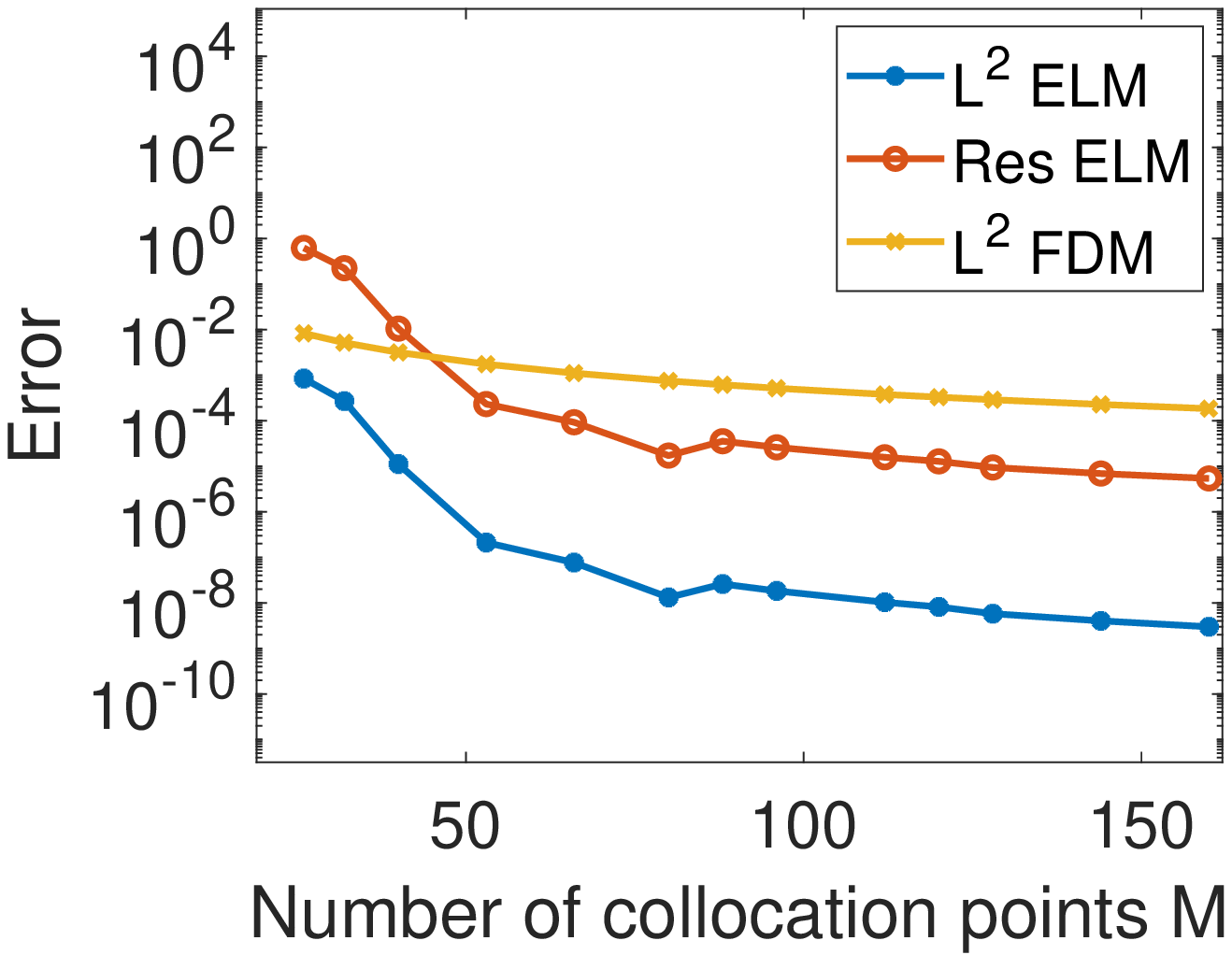}
\caption{Numerical tests for the solution of the problem \eqref{eq:ProDiff} with $\mu=1, \gamma=0, \lambda=300$. In this case, the P\'eclet number is $\mathbb{P}e_g=50$, the boundary layer is of the order of $10^{-1}$. On the top panel are shown the exact analytical \eqref{eq:exact_2} and the ELM numerical solutions for $n=40,80$; the number of collocation points was set to $M=n/2$. On the bottom panels are depicted the error and the residual convergence: on the left panel we fix $M=n/2$ and vary $n$ and on the right panel we fix $n=80$ and vary $M$.}
\label{Test:ReazDom_err}
\end{figure}
\begin{figure}[ht]
\centering
\includegraphics[width=0.45 \textwidth]{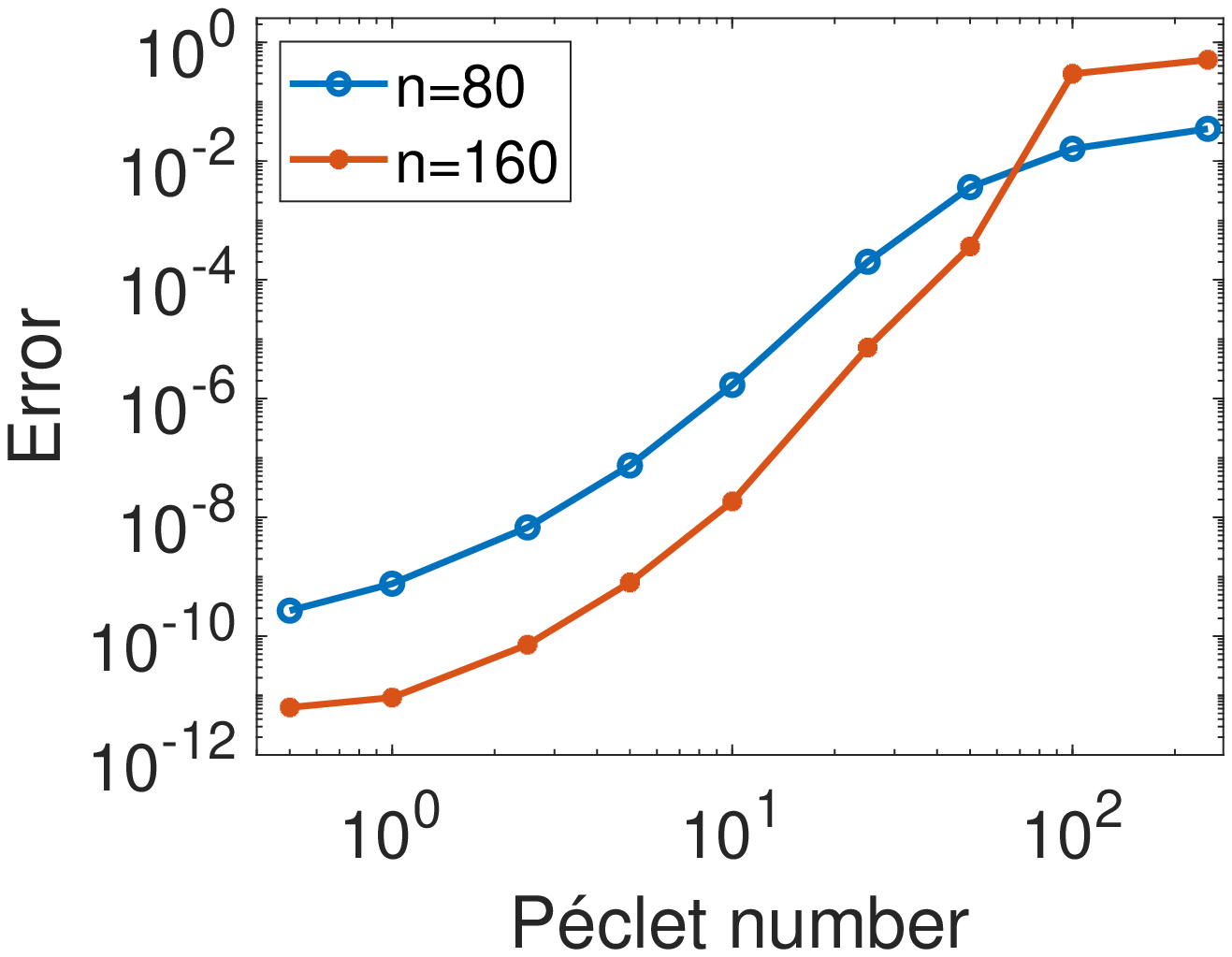}
\,
\includegraphics[width=0.45 \textwidth]{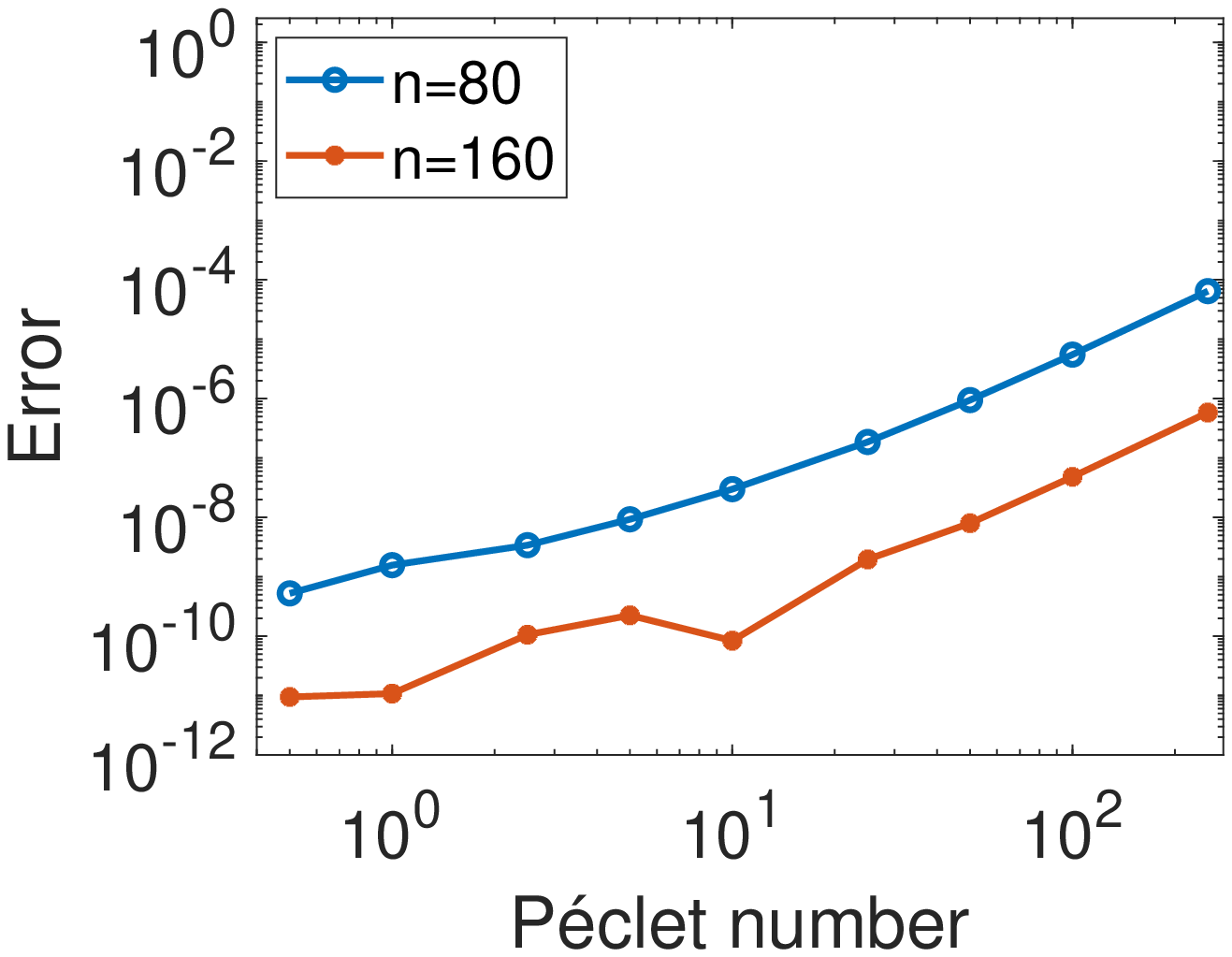}
\caption{$L^2$ errors of computed ELM solutions with fixed $n$ and $M=n/2$. On abscissae, we report the P\'eclet number as computed with equation \eqref{eq:Pe_1} on the left and equation \eqref{eq:Pe_2} on the right. In particular, on the left panel we consider $\gamma=[0,1,2,5,10,20,50,100,200,500]$ and on the right panel $\lambda=[0,3,6,15,30,60,150,300,600,1500]$.}
\label{Test:Peclet}
\end{figure}

Finally, we consider both the advection- and the reaction- dominated
problems taking different $\gamma$ and $\lambda$ so that we get appropriate values of the P\'eclet number. In Figure \ref{Test:Peclet}, we plot the $L^2$ errors of the computed ELM solutions versus the P\'eclet numbers for two different choices of number of neurons.

\subsection{Internal Layer Problems}\label{sec:internal}
In this section, we consider high transient problems that lead to an internal layer. For the solutions of these problems  an adaptive mesh is usually proposed. For a description of the numerical problems that can arise, we refer to \cite{MITCHELL2013350}. In this section, we show that the ELM network can provide good approximations. First, we consider the $atan$ problem:

\begin{equation}
\left \{ \begin{array}{l}
-u''=\frac{(2\alpha^3(x-x_0)}{(1+\alpha^2(x-x_0)^2)^2}  , \qquad 0<x<1\\
u(0)=\theta_0\,,\ u(1)=\theta_1
\end{array} \right.
\label{eq:peak_1}
\end{equation}
$\theta_0,\theta_1$ are fixed in order to have an exact solution that reads:
\begin{equation} 
u(x)=\text{atan}(\alpha(x-x_0))
\label{eq:sol_atan}
\end{equation}
In our tests, we fix $\alpha=60$ and $x_0=4/9$ that leads to a non-symmetric internal layer as in \cite{demkowicz2002fully}. 
Figure \ref{Test:Arctan_err} depicts the exact-analytical solution and some of the numerical ones. The approximation errors with respect to the exact-analytical solution are also given.

\begin{figure}[th]
\centering
\includegraphics[width=0.45 \textwidth]{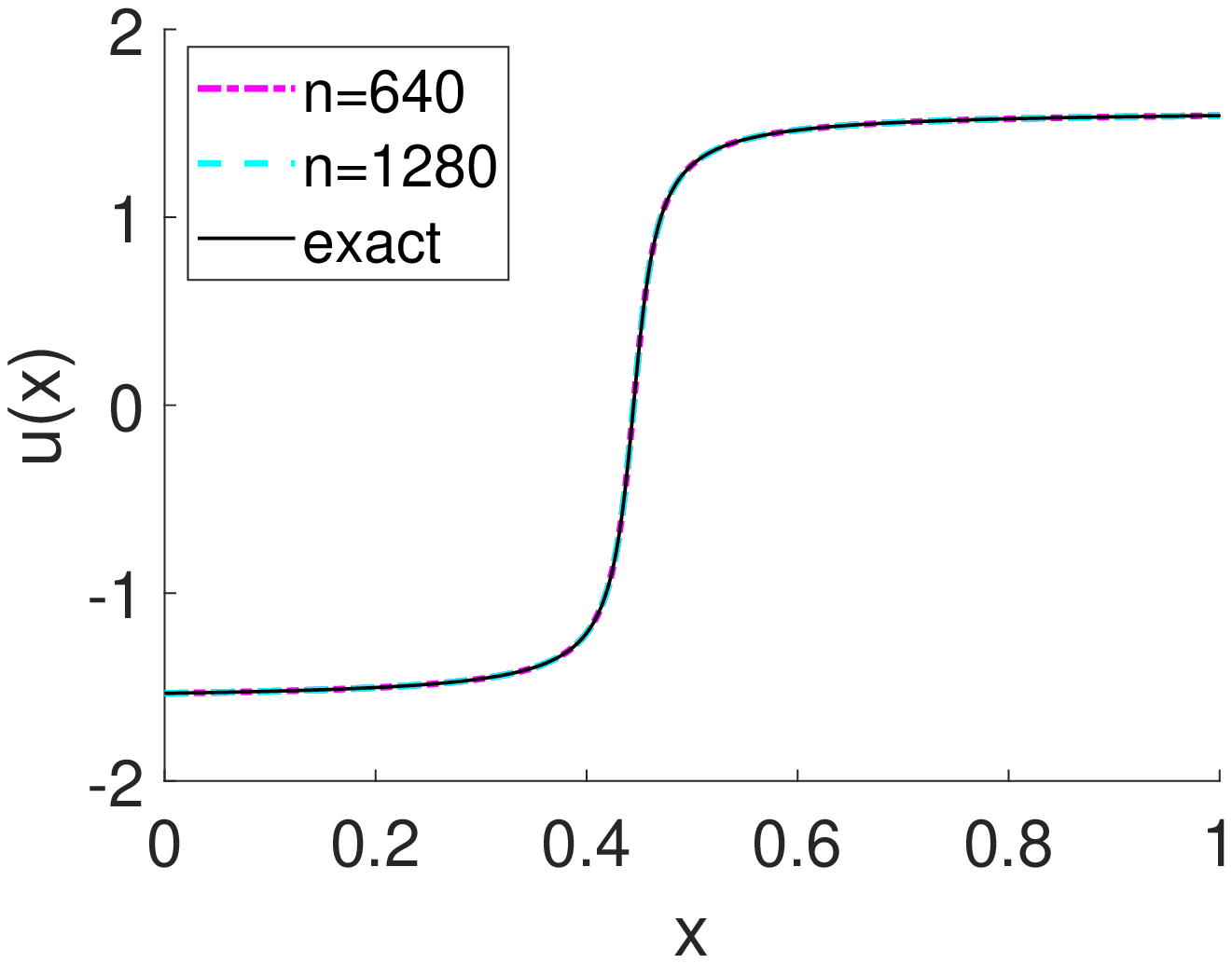}\\
\includegraphics[width=0.45 \textwidth]{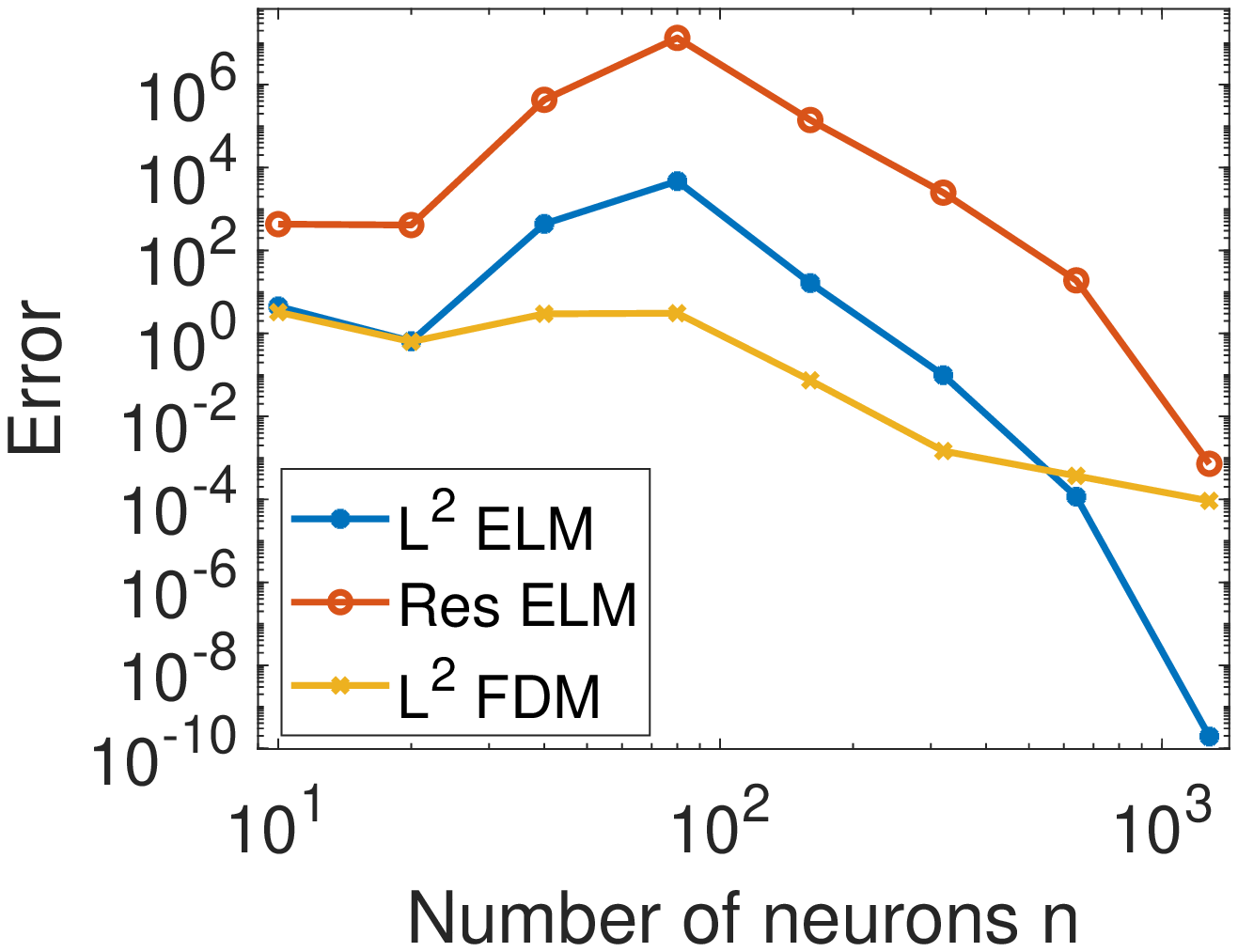}
\,
\includegraphics[width=0.45 \textwidth]{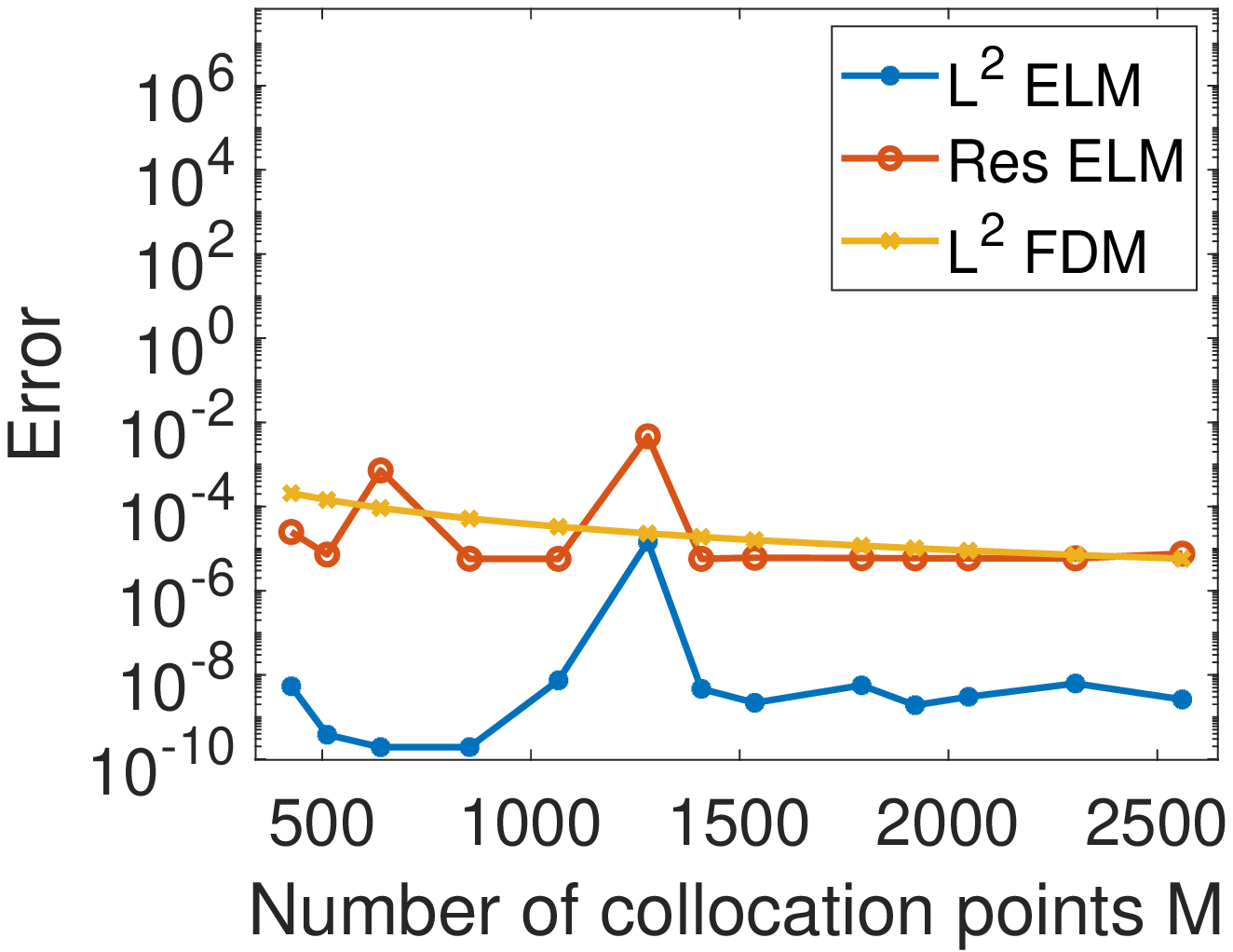}
\caption{
Numerical tests for the solution of problem \eqref{eq:peak_1} with $\alpha=60$ and $x_0=4/9$. On the top panel it is depicted the exact-analytical \eqref{eq:sol_atan} and the ELM numerical solutions for $n=640,1280$; the number of collocation points $M$ was fixed to $n/2$. On the bottom panel, we depict the error and residual convergence: on the left panel, we fix $M=n/2$ and vary $n$ and on the right panel we fix $n=1280$ and vary $M$.}
\label{Test:Arctan_err}
\end{figure}

The second test with internal peak is a rescaled sinusoidal problem suggested in \cite{schmidt2000posteriori}:
\begin{equation} 
\left \{ \begin{array}{l}
-u''=\big(\frac{-4x^2}{\varepsilon^2}+\frac{2}{\varepsilon}\big)e^{-x^2/\varepsilon}   , \qquad 0<x<1\\
u(0)=\theta_0\,,\ u(1)=\theta_1
\end{array} \right.
\label{eq:peak_2}
\end{equation}
$\theta_0,\theta_1$ are fixed in order to have as exact solution: 
\begin{equation}
u(x)=e^{-x^2/\varepsilon} \ .
\label{eq:exact_peak_2}
\end{equation}
We consider the case $\varepsilon=10^{-3}$. Figure \ref{Test:Peak_err} depicts the exact-analytical solution and  some of the numerical ones. The approximation errors with respect to the exact-analytical solution are also given.

\begin{figure}[th]
\centering
\includegraphics[width=0.45 \textwidth]{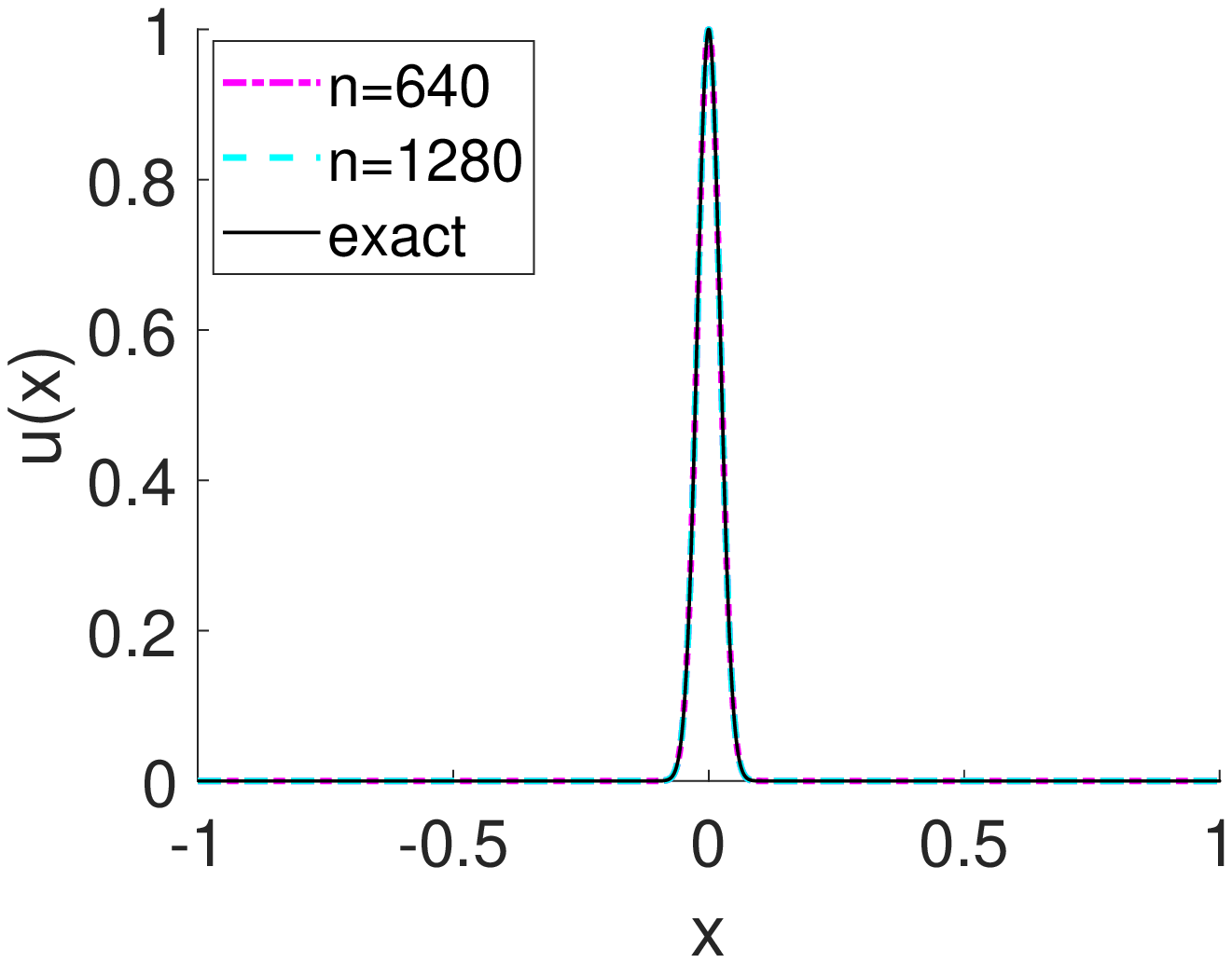}\\
\includegraphics[width=0.45 \textwidth]{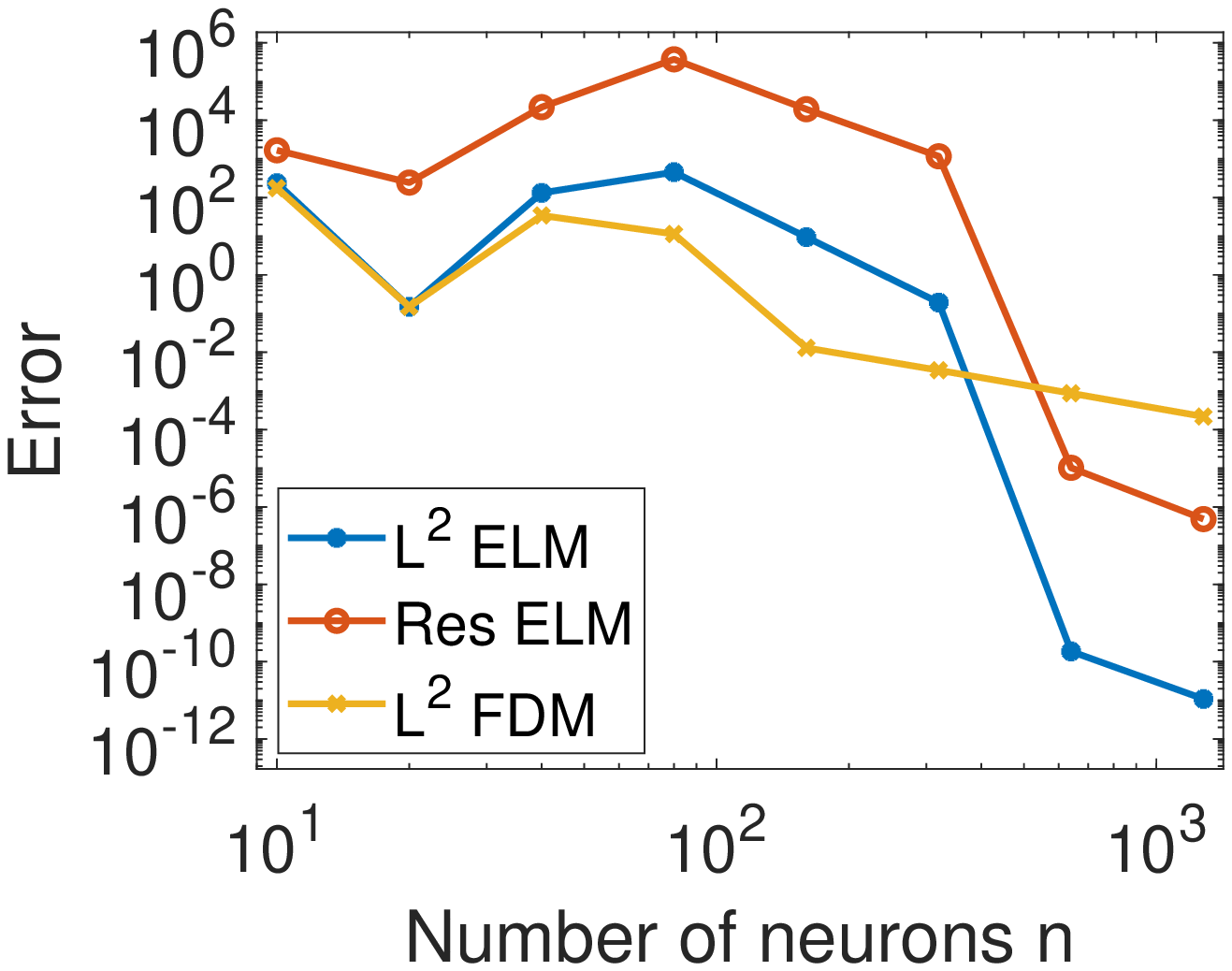}
\,
\includegraphics[width=0.45 \textwidth]{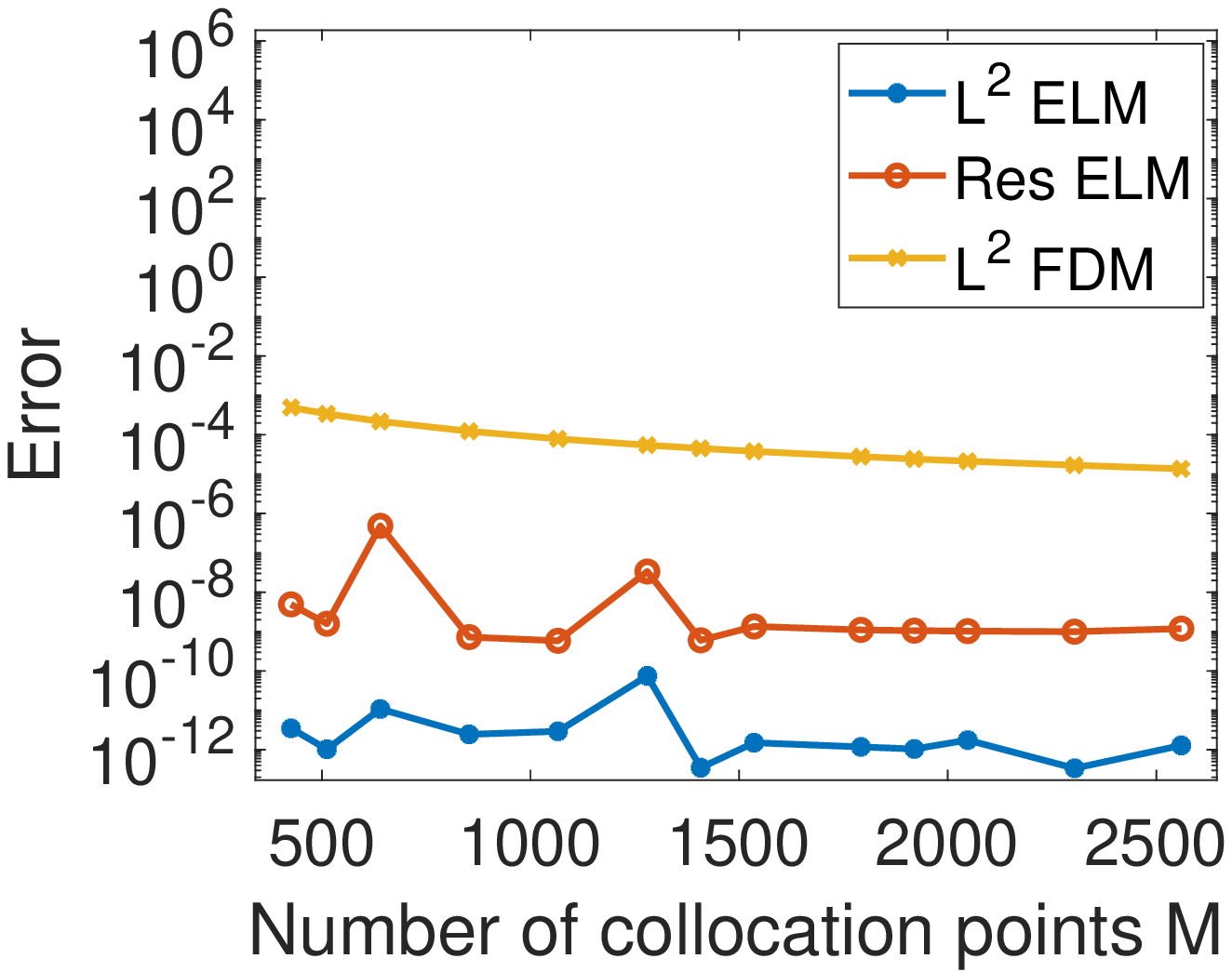}

\caption{Numerical results for the solution of the problem \eqref{eq:peak_2} with $\varepsilon=10^{-3}$. On the top panel are depicted the exact-analytical \eqref{eq:exact_peak_2} and the computed ELM solutions with number of neurons $n=640,1280$ and number of collocation points $M=n/2$. On the bottom panel, are depicted the error and residual convergence: on the left panel, we fix $M=n/2$ and vary $n$ and on the right panel, we fix $n=1280$ and vary $M$.}
\label{Test:Peak_err}
\end{figure}

To this end, we consider a very difficult numerical problem that has as exact-analytical solution a comb-like profile. For this problem, we compute the approximation errors using several number of neurons and the behavior only on collocation points. The equation we consider is the following:
\begin{equation}
\left \{ \begin{array}{l}
-u''=-2(\varepsilon+x)\cos\bigg(\frac{1}{\varepsilon+x}\bigg)-\frac{\sin\big(\frac{1}{\varepsilon+x}\big)}{(\varepsilon+x)^4} , \qquad 0<x<1\\
u(0)=\theta_0\,,\ u(1)=\theta_1
\end{array} \right.
\label{eq:oscil}
\end{equation}
We have set $\theta_0,\theta_1$ in order to have the following exact-analytical solution:
\begin{equation}
u(x)=\sin\Biggl(\frac{1}{\varepsilon+x}\Biggr) \ .
\label{eq:exact_oscil}
\end{equation}
Then, we considered the case $\varepsilon=1/10\pi$ that gives a solutions with 10 zeros and 5 oscillations in the domain. Figure \ref{Test:Oscillatory_err} depicts the exact-analytical solution as well as the ELM numerical solution for $n=1280, 2560$ and the resulting approximation errors with respect to the exact-analytical solution considering only the approximation errors at the collocation points using the $L^\infty$ norm.

\begin{figure}[th]
\centering
\includegraphics[width=0.45 \textwidth]{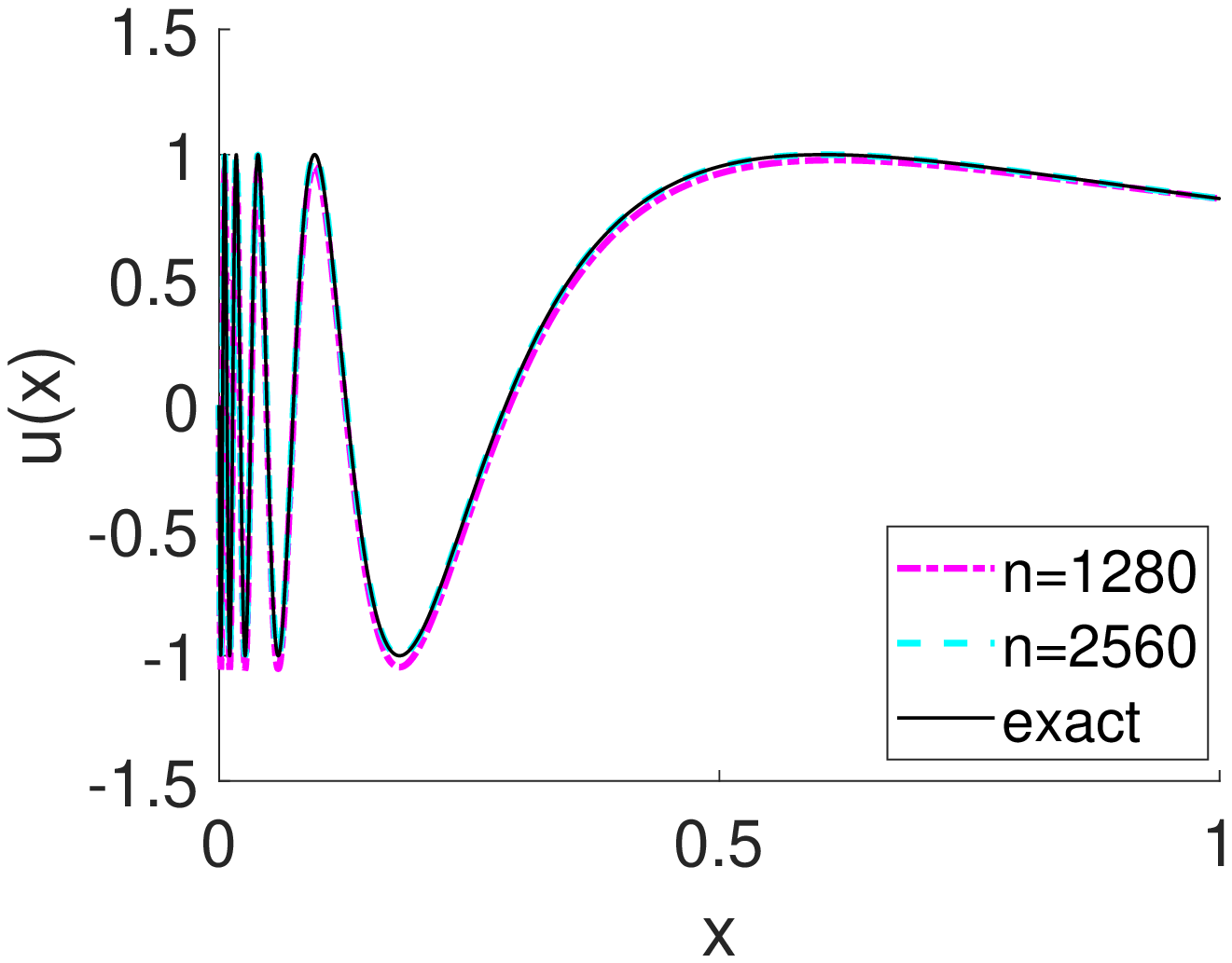}\\
\includegraphics[width=0.45 \textwidth]{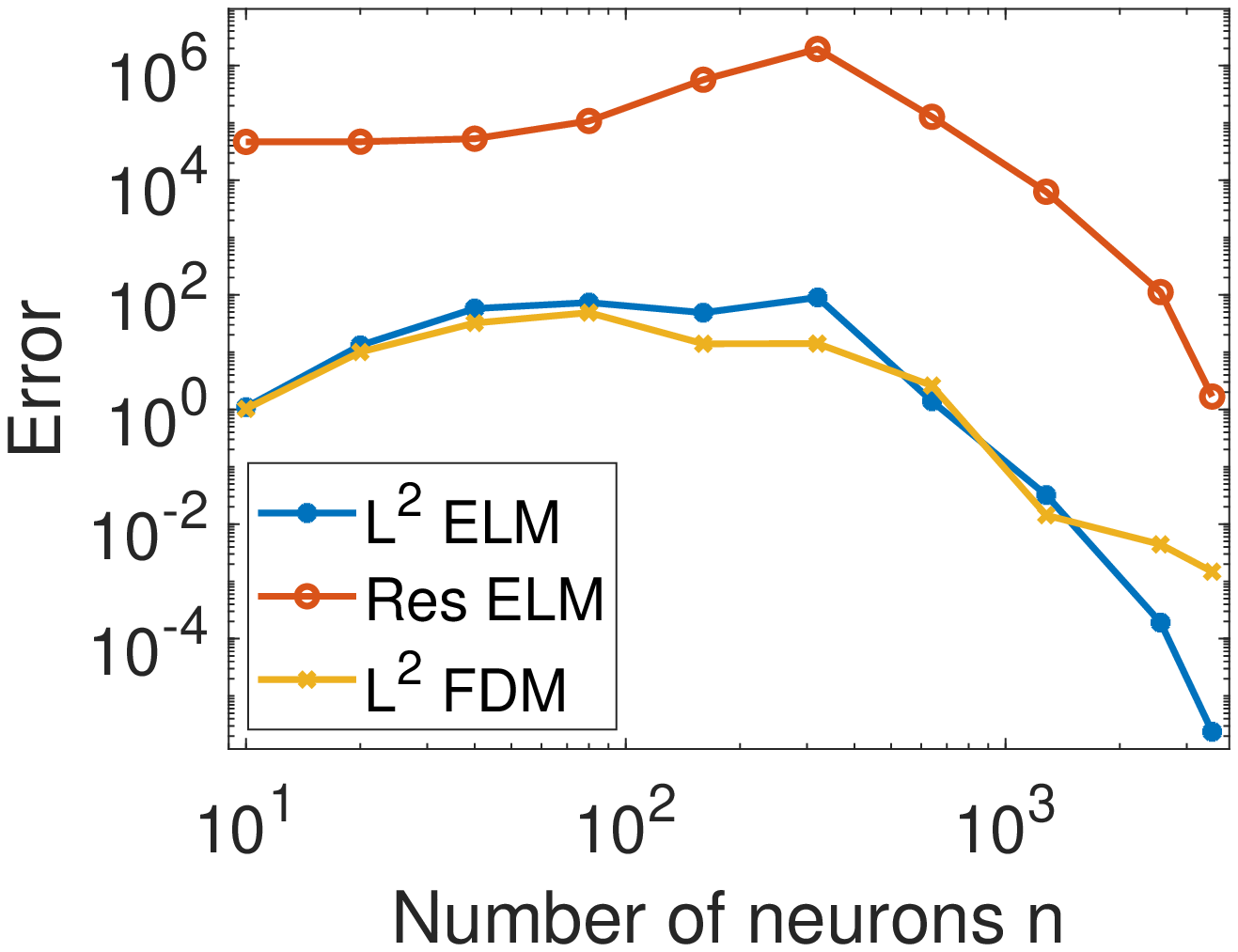}
\,
\includegraphics[width=0.45 \textwidth]{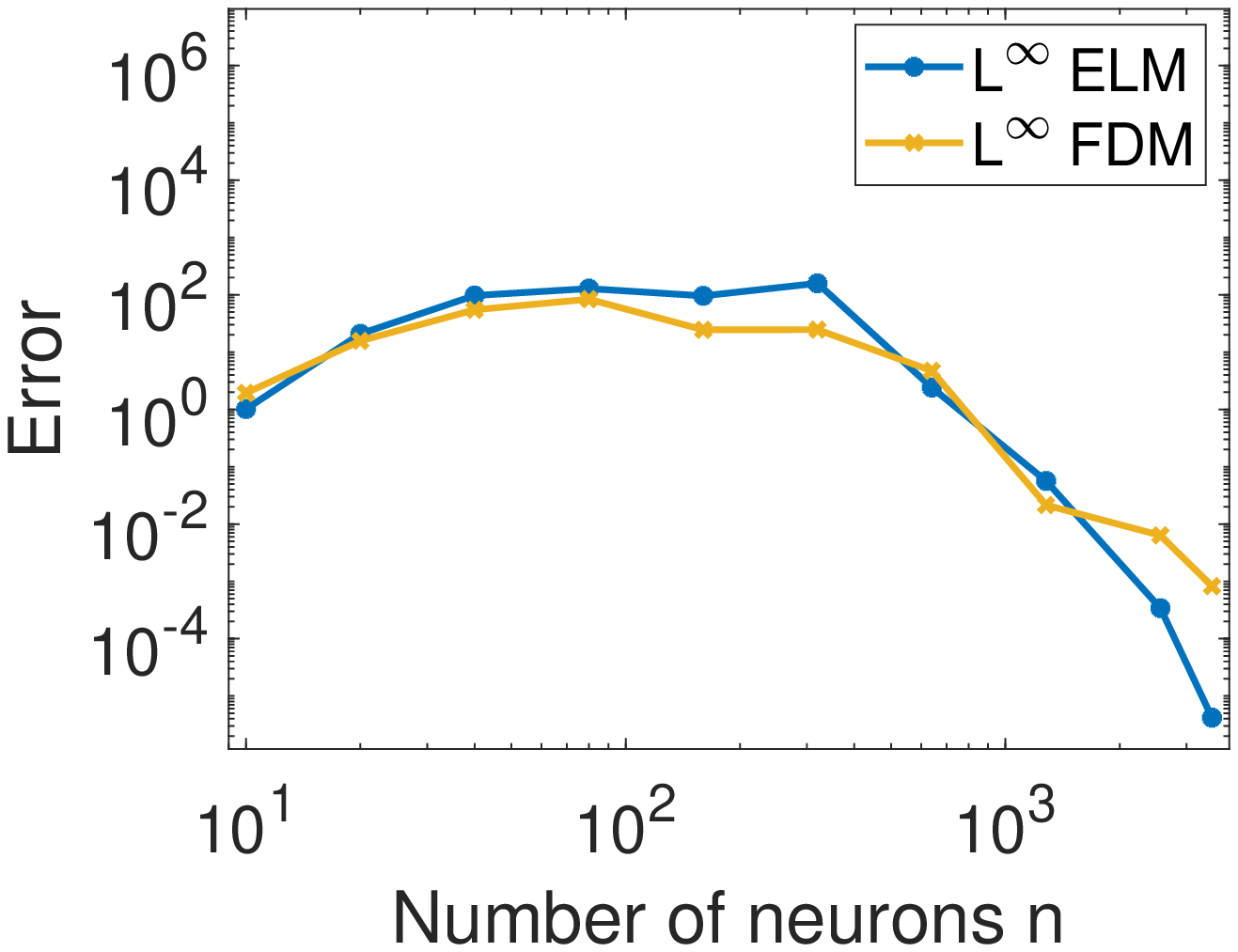}
\caption{Numerical results for the solution of the problem \eqref{eq:oscil} with $\varepsilon=1/10\pi$. On the top panels it is depicted the exact-analytical \eqref{eq:exact_oscil} and the numerical solutions with the number of neurons $n=1280,2560$ and number of collocation points fixed at $M=n/2$. On the bottom panels are shown the error and residual convergence: on the left panel, it is shown the  $L^2$ norms with fixed $M=n/2$ and on the right panel it is shown the $L^\infty$ on the collocation points.}
\label{Test:Oscillatory_err}
\end{figure}

\section{Conclusions and future work} 
In the present paper, we address collocation scheme based on ELM networks for the numerical approximation of 1D elliptic problems with steep gradients. The weights between the input layer and the hidden layer as well as the biases are fixed appropriately in advance and kept fixed. In this way, the only coefficients that describe the approximation space are the weights of the linear combination used for the element of the linear activation of the ELM. First, we have explored the underlying space, so as to construct it in a random but proper way. Then, the numerical solution to the boundary value problems was reduced to the computation of the $n$ weights of the ELM network via $M$ collocation equations. We have proved the convergence of the scheme in the square case, and proposed the use of an underdetermined collocation scheme to avoid ill-conditioning.\par
Numerical tests were performed for the evaluation of the approximation accuracy of the proposed method with the aid of benchmark boundary value problems exhibiting sharp gradients. We also compared the proposed approach with a high order finite difference (FD) scheme. We show that for both internal and boundary layer problems the proposed method outperforms FD, when the number of neurons is taken to be big enough to catch the steep gradient of the layer. We emphasize also that in our case, the solutions are $C^\infty$ functions.\par
The proposed method can be further developed by exploring some alternatives, such as the use of optimal activation functions, as proposed in \cite{guliyev2018approximation}, or the selection of the solution of the under-determined system by using the properties of null rules, as explored in \cite{calabro2019null}. Finally, the application of the proposed ELM collocation method can be extended in various ways. Among others, we are currently considering the use for:
\begin{itemize}
\item Non linear differential problems;
\item Multidimensional problems;
\item Time-dependent problems;
\item Inverse problems with overfitting.
\end{itemize}

%

\section*{Acknowledgements}
The authors would like to thank Prof. Ferdinando Auricchio, Prof. Bert J\"{u}ttler for fruitful discussions on the subject of the paper.
Francesco Calabr\`o are partially supported by INdAM, through GNCS research projects. This support is gratefully acknowledged.

 \bibliographystyle{abbrv}
 \bibliography{AnnPDE_Bib}

\end{document}